\newtheorem{remark}{Remark}[section]
\newtheorem{thm}{Theorem}[section]
\newtheorem{lem}[thm]{Lemma}
\newcommand{\p}{\partial}
\newcommand{\ds}{\displaystyle}
\title{Multiscale solution decomposition of nonlocal-in-time problems with application in numerical computation}
\author{ Mengmeng Liu\thanks{School of Mathematics, Shandong University, Jinan 250100, P. R. China. (Email: liumengmeng423@163.com)}
	\and Jie Ma\thanks{School of Statistics and Mathematics, Shandong University of Finance and Economics, Jinan 250014, P. R. China. (Email: jmamath@sdufe.edu.cn)}
	\and Wenlin Qiu\thanks{School of Mathematics, Shandong University, Jinan 250100, P. R. China. (Email: wlqiu@sdu.edu.cn)}
	\and Xiangcheng Zheng\thanks{School of Mathematics, State Key Laboratory of Cryptography and Digital Economy Security, Shandong University, Jinan, 250100, P. R. China. (Email: xzheng@sdu.edu.cn)}
}
\begin{document}

	\maketitle

	\begin{abstract}
		This work develops a multiscale solution decomposition (MSD) method for nonlocal-in-time problems to separate a series of known terms with multiscale singularity from the original singular solution such that the remaining unknown part becomes smoother. We demonstrate that the MSD  provides a scenario where the smoothness assumption for solutions of weakly singular nonlocal-in-time problems, a commonly encountered assumption in numerous literature of numerical methods that is in general not true for original solutions,  becomes appropriate such that abundant numerical analysis results therein become applicable. From computational aspect, instead of handling solution singularity, the MSD significantly reduces the numerical difficulties by separating and thus circumventing the solution singularity. We consider typical problems, including the fractional relaxation equation, Volterra integral equation, subdiffusion, integrodifferential equation and diffusion-wave equation, to demonstrate the universality of MSD and its effectiveness in improving the numerical accuracy or stability in comparison with classical methods.
	\end{abstract}
	
	\begin{keywords}
		nonlocal-in-time, multiscale solution decomposition, regularity, error estimate
	\end{keywords}
	
	\begin{AMS}
		65M12, 65M60
	\end{AMS}
	
	\pagestyle{myheadings}
	\thispagestyle{plain}
	\markboth{M. Liu, J. Ma, W. Qiu, X. Zheng}{Multiscale decomposition of nonlocal problems}

	\section{Introduction}
	
	Nonlocal-in-time problems provide adequate descriptions for memory or hereditary properties in complex processes such that they attract increasing attentions with applications in various fields \cite{Bar,CheChe,Chen,Chung,Kor,Pruss,Renardy1,Tur}. A typical feature of nonlocal-in-time problems with weakly singular kernels is that their solutions usually exhibit initial singularities. For this reason, a general consensus is that it is not appropriate to assume smoothness conditions of the solutions for such problems in numerical analysis, and various numerical treatments, e.g. the locally-refined mesh, have been adopted to accommodate the solution singularity.
	
In this work, we consider the nonlocal-in-time problems with weakly singular kernels and smooth data and handle their singularity in a different way, that is, we perform a multiscale solution decomposition (MSD) for the solutions, which separates a series of known terms with different degrees of singularity from the solution such that the remaining unknown part becomes smoother than the original solution. In particular, the unknown part satisfies the same kind of equation as the original solution with a different forcing term, which is the key to improve the solution regularity. Based on the MSD, the following comments could be addressed:
\begin{itemize}
\item \textbf{When smoothness assumption becomes appropriate?} As the unknown part in MSD and the original solution satisfy the same kind of equation with an improved solution regularity,  the MSD indeed provides a scenario where the smoothness assumptions for solutions of nonlocal-in-time problems with weakly singular kernels becomes valid. Under this scenario,
 abundant numerical analysis results for nonlocal-in-time problems in the literature, which are based on the assumption that the solutions are smooth, are applicable (see e.g. the statements below Theorem \ref{thm2.1} for details).

 \item \textbf{Advantages for numerical computation:} Since the unknown part in MSD is smoother than the original solution, one could obtain a higher numerical accuracy on uniform mesh or the same accuracy under a less-singular (i.e. less locally-refined) mesh with an improved numerical stability, compared with the classical numerical method. Consequently, instead of handling solution singularity, the MSD significantly reduces the numerical difficulties by separating and thus circumventing the solution singularity (see e.g. the statements below Theorem \ref{thm3.1} for details).
\end{itemize}

To demonstrate the effectiveness of the MSD by concrete examples, we perform the following investigations:
\begin{itemize}
\item[$\blacktriangle$] We present the basic idea of MSD by a simple fractional relaxation equation, where a rigorous proof is given to show how the solution regularity is improved. Then the nonuniform L1 scheme for fractional relaxation equation of the unknown part is analyzed with an improved accuracy $\min\big\{2-\alpha, (n+1)r\alpha \big\}$ for some $n\geq 0$, where $n=0$ corresponds to the classical estimate of the nonuniform L1 scheme. Numerical experiments indicate that the MSD could improve numerical accuracy or stability in comparison with the classical method.

\item[$\blacktriangle$] We extend MSD method to  typical nonlocal-in-time problems such as Volterra integral equation and partial differential equation (PDE) models including the subdiffusion, integrodifferential equation and diffusion-wave equation:
\begin{itemize}
	\item[$\bullet$] We show that the MSD could effectively improve the solution singularity of Volterra integral equations such that the corresponding collocation scheme could achieve high-order accuracy under the uniform mesh. Numerical experiments are consistent with the analysis.
	
 \item[$\bullet$] The MSD of subdiffusion follows the same spirit of that of fractional relaxation equation due to their similar structures, and numerical experiments again demonstrate the advantages of the MSD in comparison with the classical nonuniform L1 method.

 \item[$\bullet$] We give a detailed mathematical and numerical analysis for the MSD of integrodifferential equation, where we find that the solution regularity and numerical accuracy will be significantly improved by separating only one singular term from the solutions. This means that the MSD method is particularly suitable for integrodifferential problems. In particular,  the MSD improves the numerical accuracy of the trapezoidal convolution quadrature scheme for integrodifferential equation from $O(\tau^{1+\alpha})$ to $O(\tau^2)$ on uniform partition.

 \item[$\bullet$] We further comment on the MSD of the diffusion-wave equation, which could be transformed to an integrodifferential equation such that the previously proposed methods and results for integrodifferential equation could be applied.
 \end{itemize}
\end{itemize}

The rest of the paper is organized as follows: Section \ref{Sec3} presents basic ideas of the MSD by a simple fractional relaxation equation, where the error estimate for the nonuniform L1 scheme of the fractional derivative under improved regularity is proved in Section \ref{sec4}. In Section \ref{Secq}, the MSD method is extended to typical nonlocal-in-time models such as Volterra integral equation, subdiffusion, integrodifferential equation and diffusion-wave equation. We finally address concluding remarks in the last section.

\textbf{Notations:} Let $\Omega$ be an open bounded domain in $\mathbb R^d$ ($1\leq d\leq 3$) with smooth boundary $\p\Omega$, and let $L^p(\Omega)$ with $1\leq p\leq \infty$ be the Banach space of $p$th power Lebesgue integrable functions on $\Omega$. For $0\leq m\in \mathbb{N}$, let $W^{m,p}(\Omega)$ be the Sobolev space of $L^p$ functions with $m$th weakly derivatives in $L^p(\Omega)$. All spaces are equipped with standard norms \cite{AdaFou}. In particular, we set $ H^m(\Omega) := W^{m,2}(\Omega) $ and $ H_0^m(\Omega) $ be the closure of $C^\infty_0(\Omega)$ in $H^m(\Omega) $. For a Banach space $\mathcal{X}$ and some $ T > 0 $, let $ W^{m,p}(0,T;\mathcal{X}) $ be the space of functions in $ W^{m,p}(0,T) $ with respect to $\|\cdot\|_{\mathcal{X}}$. We also denote $C^m[0,T]$ as the space of $m$th continuously differentiable functions on $[0,T]$. For simplicity, we denote $\|\cdot\|:=\|\cdot\|_{L^2(\Omega)}$ and omit $\Omega$ in notations of spatial norms.

Let $\{\lambda_i\}_{i=1}^{\infty}$ and $\{\phi_{i}\}_{i=1}^{\infty}$ denote the eigenvalues and orthonormal eigenfunctions of $-\Delta:H^2\cap H^1_0\rightarrow L^2$, and define
	\begin{align}\label{zz1}
	\dot{H}^r:=\Big\{v\in L^2:|v|_{\dot{H}^r}^2:=\left((-\Delta)^rv,v\right)=\sum_{i=1}^\infty\lambda_i^r(v,\phi_i)^2<\infty\Big\},
	\end{align}
	equipped with the norm $\| g\| _{\dot{H} ^{r } }: = \big ( \| g\| _{L^{2} }^{2}+ | g| _{\dot{H} ^{r} }^{2}\big) ^{1/ 2}$. It is known that $\dot{H} ^{0} = L^{2}$, $\dot{H} ^2=H^2 \cap H_{0}^{1}$ and for $2j-3/2<r<2j+1/2$ for some $1\leq j\in\mathbb N$, $v\in \dot H^r$ implies $v\in H^r$ and $\Delta^i v=0$ for $0\leq i\leq j-1$ \cite[Appendix 2.4]{Jinbook}.
		In this work, $Q$ denotes a generic positive constant that may assume different values at different occurrences.



	\section{Basic idea of MSD}\label{Sec3}
	We demonstrate the idea of MSD by a simple fractional relaxation equation
	\begin{align}\label{mh0}
		\p_t^\alpha u(t)+\lambda u(t)=f(t),~~t\in (0,T];~~u(0)=0.
	\end{align}
		Here $0<\alpha<1$, $T>0$, $\lambda\in\mathbb R$ and $f$ is a bounded forcing term on $[0,T]$. The fractional derivative $\p_t^\alpha u$ is defined via the convolution symbol $*$
	\begin{align*}
	\p_t^\alpha v:=I_t^{1-\alpha} v',~~I_t^{\nu}v:=\beta_{\nu}*v:=\frac{1}{\Gamma(\nu)}\int_0^t\frac{v(s)ds}{(t-s)^{1-\nu}},~~\beta_\nu(t):=\frac{t^{\nu-1}}{\Gamma(\nu)},~~\nu>0.
\end{align*}
	Note that it suffices to consider the homogeneous initial condition since the inhomogeneous case could be transformed to the homogeneous case by variable substitution.
	\subsection{Theoretical aspect}
	For (\ref{mh0}), we introduce the MSD for some $n\geq 1$
	\begin{equation}\label{mh1}
		u=v+I^\alpha_t \sum_{i=0}^{n-1} (-\lambda I^\alpha_t)^i f,
	\end{equation}
	where the function $v$ remains to be determined. It is clear that if $n=0$, then $u=v$. Some features are immediately observed:
	\begin{itemize}
	\item \textbf{Multiscale singularity}: The summands in (\ref{mh1}) exhibit multiscale initial singularities. For instance, if $f\equiv 1$, then (\ref{mh1}) becomes
	\begin{align*}
		u=v+ \sum_{i=0}^{n-1} (-\lambda )^i\frac{t^{(i+1)\alpha}}{\Gamma((i+1)\alpha+1)},
	\end{align*}
	in which the derivatives of the summands of the same order exhibit singularities of different degrees at $t=0$.
	
	\item \textbf{Numerical feasibility}: As the summands in (\ref{mh1}) are known functions, one could approximate $v$ by certain algorithms and then recover $u$ by (\ref{mh1}), instead of approximating $u$ directly. By this means, the error of $v$ equals to that of $u$ due to the relation (\ref{mh1}) such that, if $v$ has better regularity than $u$ due to the singularity separation by (\ref{mh1}), the numerical approximation will be significantly facilitated, as we will see in the rest of the work.
	\end{itemize}

 We invoke (\ref{mh1}) in (\ref{mh0}) to get
	\begin{align*}
		\p_t^\alpha \Big[v+I^\alpha_t \sum_{i=0}^{n-1} (-\lambda I^\alpha_t)^i f\Big]+\lambda \Big[v+I^\alpha_t \sum_{i=0}^{n-1} (-\lambda I^\alpha_t)^i f\Big]=f.
	\end{align*}
	As $f$ is bounded, $v(0)=u(0)=0$ and we apply the semigroup property of the fractional integral operator to get
	\begin{align*}
		\p_t^\alpha v + \sum_{i=0}^{n-1} (-\lambda I^\alpha_t)^i f+\lambda v-\sum_{i=0}^{n-1} (-\lambda I^\alpha_t)^{i+1} f=f,
	\end{align*}
	which, after canceling the same terms, becomes
$
		\p_t^\alpha v +  f+\lambda v- (-\lambda I^\alpha_t)^{n} f=f,
	$
	i.e.
	\begin{align}\label{mh5}
		\p_t^\alpha v +\lambda v=(-\lambda I^\alpha_t)^{n} f, \quad  t\in (0,T]; \quad v(0)=0.
	\end{align}
	Note that both $u$ and $v$ satisfy the fractional relaxation equation with different forcing terms, which means that the MSD process does not generate new models. However, we will show that $v$ is smoother than $u$.
	
	\begin{theorem}\label{thm2.1}
	The following statements hold:
	\begin{itemize}
 \item If $f(0)$ exists and $|f'|\leq Qt^{-\sigma}$ for some $\sigma<1$, then $|v'|\leq Qt^{(n+1)\alpha-1}$ for $t\in (0,T]$ and $n\geq 1$;

 \item If $f'(0)$ exists and $|f''|\leq Qt^{-\tilde\sigma}$ for some $\tilde\sigma<1$, then $|v''|\leq Qt^{(n+1)\alpha-2}$ for $t\in (0,T]$ and $n\geq 1$.
		\end{itemize}
	\end{theorem}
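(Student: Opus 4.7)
The strategy is to represent $v$ by an explicit Neumann series and then read off the leading singular profile of its first/second derivative term by term.

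First I would apply $I_t^\alpha$ to (\ref{mh5}) and use $v(0)=0$ together with the semigroup property already invoked in the derivation, obtaining the Volterra fixed-point identity $v=I_t^\alpha g-\lambda I_t^\alpha v$ with forcing $g:=(-\lambda I_t^\alpha)^n f$. Iterating this identity and sending the number of iterations to infinity (the remainder $(-\lambda)^N I_t^{N\alpha}v$ vanishes uniformly on $[0,T]$ by the standard bound $\|I_t^{N\alpha}v\|_\infty\leq T^{N\alpha}\|v\|_\infty/\Gamma(N\alpha{+}1)$) yields
\begin{equation*}
v(t)=\sum_{m=n}^{\infty}(-\lambda)^{m}I_t^{(m+1)\alpha}f(t),
\end{equation*}
which is exactly the tail of the natural Neumann expansion of $u$ beyond the $n$ terms subtracted off in (\ref{mh1}).

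Next I would extract the leading singularity of each summand by integration by parts. For the first statement, writing
\begin{equation*}
I_t^{(m+1)\alpha}f(t)=\frac{t^{(m+1)\alpha}}{\Gamma((m+1)\alpha+1)}f(0)+I_t^{(m+1)\alpha+1}f'(t)
\end{equation*}
and differentiating gives $\frac{d}{dt}I_t^{(m+1)\alpha}f(t)=\frac{t^{(m+1)\alpha-1}}{\Gamma((m+1)\alpha)}f(0)+I_t^{(m+1)\alpha}f'(t)$. The hypothesis $|f'(s)|\leq Q s^{-\sigma}$ with $\sigma<1$ gives $|I_t^{(m+1)\alpha}f'(t)|\leq Q t^{(m+1)\alpha-\sigma}$ via the beta integral, which is strictly less singular than $t^{(m+1)\alpha-1}$. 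Summing in $m\geq n$, the dominant contribution is precisely the $m=n$ piece $\frac{t^{(n+1)\alpha-1}}{\Gamma((n+1)\alpha)}f(0)$, and the tail is controlled by a series of the form $\sum_{m>n}|\lambda|^{m}T^{(m+1)\alpha-1}/\Gamma((m+1)\alpha)$, which is convergent. This yields $|v'(t)|\leq Q t^{(n+1)\alpha-1}$.

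For the second statement, I would integrate by parts twice to get
\begin{equation*}
I_t^{(m+1)\alpha}f(t)=\frac{t^{(m+1)\alpha}}{\Gamma((m+1)\alpha+1)}f(0)+\frac{t^{(m+1)\alpha+1}}{\Gamma((m+1)\alpha+2)}f'(0)+I_t^{(m+1)\alpha+2}f''(t),
\end{equation*}
then differentiate twice. The $f(0)$ piece produces $\frac{t^{(m+1)\alpha-2}}{\Gamma((m+1)\alpha-1)}f(0)$, the $f'(0)$ piece produces an $O(t^{(m+1)\alpha-1})$ remainder, and the last piece is bounded by $Q t^{(m+1)\alpha-\tilde\sigma}$ using $|f''|\leq Q t^{-\tilde\sigma}$ with $\tilde\sigma<1$; all three of the latter are less singular than $t^{(n+1)\alpha-2}$. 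Again the $m=n$ term dominates and the tail over $m>n$ converges, giving $|v''(t)|\leq Q t^{(n+1)\alpha-2}$.

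\textbf{Main obstacle.} The delicate point is the term-by-term differentiation of the Neumann series on $(0,T]$, because the low-index terms $m=n$ may fail to be integrable near $t=0$ when $(n+1)\alpha<1$ (first item) or $(n+1)\alpha<2$ (second item). I would bypass this by proving absolute and uniform convergence of the differentiated series on $[\varepsilon,T]$ for each $\varepsilon>0$ using the factorial growth of $\Gamma((m+1)\alpha)$ against $|\lambda|^m T^{(m+1)\alpha}$, which legitimizes the formal differentiation on $(0,T]$ and produces the explicit singular profile extracted above.
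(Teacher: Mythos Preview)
Your argument is correct and is, at bottom, the same computation as the paper's --- just unpacked. The paper writes the solution of (\ref{mh5}) via the Mittag--Leffler kernel,
\[
v=(-\lambda)^n\bigl(t^{(n+1)\alpha-1}E_{\alpha,(n+1)\alpha}(-\lambda t^\alpha)\bigr)*f,
\]
which is precisely the resummation of your Neumann series $\sum_{m\geq n}(-\lambda)^m I_t^{(m+1)\alpha}f$. It then differentiates the convolution once or twice using the rule $(k*f)'=k(t)f(0)+k*f'$ and the identity $\frac{d}{dt}[t^{\beta-1}E_{\alpha,\beta}(-\lambda t^\alpha)]=t^{\beta-2}E_{\alpha,\beta-1}(-\lambda t^\alpha)$, and finishes by invoking the boundedness of $E_{\alpha,\beta}$ on $[0,T]$ together with the beta convolution $t^{(n+1)\alpha-1}*t^{-\sigma}\leq Qt^{(n+1)\alpha-1}$. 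What you do term by term --- integration by parts to peel off $f(0)$ and $f'(0)$, beta estimate on the remainder, factorial decay of $1/\Gamma((m{+}1)\alpha)$ to sum the tail --- the paper does in one stroke by packaging the whole series into $E_{\alpha,(n+1)\alpha}$ and using its standard properties. Your route is more elementary (no special functions) but carries the overhead of justifying termwise differentiation; the paper's route is shorter because that justification is hidden inside the Mittag--Leffler machinery. One small wording issue: your tail bound $\sum_{m>n}|\lambda|^m T^{(m+1)\alpha-1}/\Gamma((m+1)\alpha)$ should keep the factor $t^{(n+1)\alpha-1}$ out front (write $t^{(m+1)\alpha-1}=t^{(n+1)\alpha-1}t^{(m-n)\alpha}\leq t^{(n+1)\alpha-1}T^{(m-n)\alpha}$) so that in the regime $(n+1)\alpha>1$ you still recover the vanishing rate at $t=0$ rather than a mere constant.
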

	
	Before the proof, one could discuss the smoothness assumptions for solutionss to weakly singular nonlocal-in-time problems from the following two aspects:
	\begin{itemize}
	\item It is known that the solution $u$ to (\ref{mh0}) with smooth $f$ has initial singularities, i.e.
$|u'|\leq Qt^{\alpha-1}$ and $|u''|\leq Qt^{\alpha-2}$ \cite{Jinbook}, which coincide with the estimates in Theorem \ref{thm2.1} with $n=0$. Thus, it may be not appropriate to impose smoothness assumptions on $u$ when performing numerical analysis;

\item However, after MSD, the singular part of $u$ is separated such that the remaining part $v$ becomes smoother for $n>0$. In particular, for $n$ satisfying $(n+1)\alpha-2\geq 0$, $v'$ and $v''$ are bounded over $[0,T]$. As both $u$ and $v$ satisfy the fractional relaxation equation with different forcing terms, the smoothness assumption for the fractional relaxation equation with solution $v$ becomes appropriate. 
\end{itemize}

	\textit{Proof of Theorem \ref{thm2.1}}.
		Following \cite{Jinbook}, the solution to (\ref{mh5}) could be expressed as
		\begin{align}
			v&=(t^{\alpha-1}E_{\alpha,\alpha}(-\lambda t^\alpha))*(-\lambda I^\alpha_t)^{n} f =(-\lambda)^n (t^{\alpha-1}E_{\alpha,\alpha}(-\lambda t^\alpha))*\beta_{n\alpha}* f  \nonumber\\
			&=(-\lambda)^n (t^{(n+1)\alpha-1}E_{\alpha,(n+1)\alpha}(-\lambda t^\alpha))* f.\nonumber
		\end{align}
		Differentiate this equation to get
		\begin{align*}
			v'=(-\lambda)^n t^{(n+1)\alpha-1}E_{\alpha,(n+1)\alpha}(-\lambda t^\alpha)f(0)+(-\lambda)^n (t^{(n+1)\alpha-1}E_{\alpha,(n+1)\alpha}(-\lambda t^\alpha))* f',
		\end{align*}
		which, together with $|f'|\leq Qt^{-\sigma}$, leads to
		\begin{align*}
			|v'|&\leq Qt^{(n+1)\alpha-1}+Q t^{(n+1)\alpha-1}*t^{-\sigma}\leq Qt^{(n+1)\alpha-1}.
		\end{align*}
		
		To bound $v''$, we differentiate the expression of $v'$ to get
		\begin{align*}
			v''&=(-\lambda)^n t^{(n+1)\alpha-2}E_{\alpha,(n+1)\alpha-1}(-\lambda t^\alpha)f(0) +(-\lambda)^n t^{(n+1)\alpha-1}E_{\alpha,(n+1)\alpha}(-\lambda t^\alpha) f'(0)\\
			&\quad+(-\lambda)^n (t^{(n+1)\alpha-1}E_{\alpha,(n+1)\alpha}(-\lambda t^\alpha))* f'',
		\end{align*}
		which, together with $|f''|\leq Qt^{-\tilde \sigma}$, implies
		\begin{align*}
			|v''|&\leq Q t^{(n+1)\alpha-2}+Q t^{(n+1)\alpha-1}+Q t^{(n+1)\alpha-1}* t^{-\tilde \sigma}\leq Q t^{(n+1)\alpha-2}.
		\end{align*}
		Thus we complete the proof.

	\subsection{Numerical aspect}
\label{sec32}
		Let $0<M\in\mathbb N$  and we introduce the graded mesh
	\begin{align*}
		t_m = (m\tau)^{r}, \quad \tau = \frac{T^{1/r}}{M}, \quad  0\leq m \leq M,
	\end{align*}
	where $r\geq 1$ refers to the mesh grading parameter.  Let $\tau_m = t_m - t_{m-1}$ with $1\leq m \leq M$. We  follow \cite[Equation (5.1)]{Stynes} to get
	\begin{align}\label{wl01}
		\tau_{k+1} = T\left( \frac{k+1}{M} \right)^r - T\left( \frac{k}{M} \right)^r \leq Q TM^{-r}k^{r-1},~~0\leq k \leq M-1.
	\end{align}
	 Then we discretize $\partial_t^{\alpha}v(t_m)$ by the nonuniform L$1$ formula  for $ 1\leq m \leq M$ \cite{Stynes}
	\begin{align}
		D_{M}^{\alpha} v^m & = \frac{1}{\Gamma(2-\alpha)} \sum_{k=0}^{m-1} \frac{v^{k+1}-v^k}{\tau_{k+1}} [ (t_m-t_k)^{1-\alpha} - (t_m -t_{k+1})^{1-\alpha} ]  =  \sum\limits_{k=1}^{m} a^{(m)}_{m-k} \delta_{\tau}v^k  \nonumber
	\end{align}
	where $v^m:= v(t_m)$, $\delta_{\tau}v^k=v^{k}-v^{k-1}$ for $k\geq 1$ and the nonincreasing sequence $\big\{a^{(m)}_{m-k}\big\}_{k=1}^{m}$ is defined by
	\begin{equation*}
			a^{(m)}_{m-k} = \int_{t_{k-1}}^{t_k} \frac{\beta_{1-\alpha}(t_m-s)}{\tau_k}ds = \frac{\beta_{2-\alpha}(t_m-t_{k-1})-\beta_{2-\alpha}(t_m-t_{k})}{\tau_k}.
	\end{equation*}
	The truncation error could be decomposed as follows
	\begin{align}\label{qwl05}
		(R_1)^m := D_{M}^{\alpha} v^m - \partial_t^{\alpha}v^m = \sum_{k=0}^{m-1} T_{m,k},
	\end{align}
	where
	\begin{align}\label{wl03}
		T_{m,k} = \frac{1}{\Gamma(1-\alpha)} \int_{t_k}^{t_{k+1}} (t_m-s)^{-\alpha} \left( \frac{v^{k+1}-v^k}{\tau_{k+1}} -  v'(s)  \right)ds.
	\end{align}
By \eqref{qwl05}, the model (\ref{mh0}) at $t=t_m$ could be written as
	\begin{align}
		& D_M^\alpha v^m  +\lambda v^m =(-\lambda I^\alpha_t)^{n} f(t_m) + (R_1)^m, ~ 1\leq m \leq M;  \label{wl06}  ~ v^0 = v(0).
	\end{align}
	We omit the truncation error and replace $v^m$ by numerical solution $V^m$ to get the discrete scheme
	\begin{align}
		& D_M^\alpha V^m  +\lambda V^m =(-\lambda I^\alpha_t)^{n} f(t_m),~ 1\leq m \leq M;  \label{wl08} ~ V^0 = v(0).
	\end{align}
By \eqref{mh1}, the numerical solution for $u^m:=u(t_m)$ is given as
		\begin{align}\label{mm1}
			U^m=V^m+I^\alpha_t \sum_{i=0}^{n-1} (-\lambda I^\alpha_t)^i f(t_m).
	\end{align}
	
\begin{remark}\label{rem3}
For the special case $n=0$, the scheme (\ref{wl08})--(\ref{mm1}) degenerates to the nonuniform L1 scheme for the original model (\ref{mh0}), which has been widely studied in the literature.
\end{remark}

	To facilitate numerical analysis, we first estimate the truncation error as follows.
	 \begin{lem}\label{lem2.1} Under the assumptions in Theorem \ref{thm2.1}, we have
		\begin{align*}
			|(R_1)^m| \leq Q (t_m)^{n\alpha}  m^{-\min\{2-\alpha, (n+1)r\alpha \}} , \quad 1\leq m \leq M.
		\end{align*}
	\end{lem}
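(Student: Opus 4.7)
The proof adapts the classical Stynes-style truncation-error analysis for the nonuniform L1 scheme on graded meshes, feeding in the improved regularity $|v'(t)|\le Q t^{(n+1)\alpha-1}$ and $|v''(t)|\le Q t^{(n+1)\alpha-2}$ supplied by Theorem \ref{thm2.1}. Throughout, the analysis carries an overall growth factor $t_m^{n\alpha}$ arising from the $n$-fold application of $-\lambda I^\alpha_t$ to $f$ in the forcing of (\ref{mh5}), and the decay exponent $\min\{2-\alpha,\,r\alpha\}$ of the classical case $n=0$ upgrades to $\min\{2-\alpha,\,(n+1)r\alpha\}$.

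Following (\ref{qwl05}) I split $(R_1)^m = T_{m,0}+\sum_{k=1}^{m-1}T_{m,k}$ and estimate the two parts separately. For $T_{m,0}$, integrating the $v'$-bound over $[0,t_1]$ yields $|\tau_1^{-1}(v^1-v^0)|\le Q t_1^{(n+1)\alpha-1}$, and together with the pointwise bound on $v'(s)$ and the graded-mesh estimate $t_m-s\ge c\,t_m$ for $s\in[0,t_1]$ and $m\ge 2$ (the case $m=1$ being immediate from direct integration of $(t_1-s)^{-\alpha}$), this gives $|T_{m,0}|\le Q t_1^{(n+1)\alpha}t_m^{-\alpha}$; substituting $t_1=TM^{-r}$ and $t_m=T(m/M)^r$ rewrites the right-hand side exactly as $Q t_m^{n\alpha} m^{-(n+1)r\alpha}$. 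For $k\ge 1$, the Taylor identity
\begin{equation*}
\frac{v^{k+1}-v^k}{\tau_{k+1}}-v'(s)=\frac{1}{\tau_{k+1}}\int_{t_k}^{t_{k+1}}\!\!\int_{s}^{\xi}v''(\eta)\,d\eta\,d\xi
\end{equation*}
combined with $|v''(\eta)|\le Q t_k^{(n+1)\alpha-2}$ on $[t_k,t_{k+1}]$ (valid because $t_{k+1}/t_k$ is bounded by a constant depending only on $r$, regardless of the sign of $(n+1)\alpha-2$) produces
\begin{equation*}
|T_{m,k}|\le Q\,\tau_{k+1}\,t_k^{(n+1)\alpha-2}\!\int_{t_k}^{t_{k+1}}(t_m-s)^{-\alpha}\,ds.
\end{equation*}

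The end term $k=m-1$ is handled by evaluating $(t_m-s)^{-\alpha}$ in closed form to obtain $Q\tau_m^{2-\alpha}t_{m-1}^{(n+1)\alpha-2}$, which, via (\ref{wl01}) and $t_{m-1}\sim t_m$, reduces to $Q t_m^{n\alpha}m^{-(2-\alpha)}$. For $1\le k\le m-2$ I use $(t_m-s)^{-\alpha}\le Q(t_m-t_{k+1})^{-\alpha}$, convert $\tau_{k+1},t_k$ to powers of $k$ and $M$ via (\ref{wl01}), and split the sum at $k\sim m/2$: the lower half is bounded by $Q t_m^{-\alpha}\sum_{k\le m/2}k^{r(n+1)\alpha-2}$ (times the appropriate $M$-power), while the upper half, after factoring $t_k^{n\alpha}\le Q t_m^{n\alpha}$ out, is estimated by the Beta-type residual sum using $m^r-(k+1)^r\ge c(m-k-1)m^{r-1}$. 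Each piece contributes $Q t_m^{n\alpha}$ times either $m^{-(2-\alpha)}$ or $m^{-(n+1)r\alpha}$, which combines with the $T_{m,0}$ and $T_{m,m-1}$ estimates to give the advertised bound. The main technical obstacle is the bookkeeping in the lower-half sum: in order to recover the rate $(n+1)r\alpha$ (rather than the weaker classical $r\alpha$) from the summands near $k=1$, the improvement from the $t_k^{n\alpha}$ factor must be channeled entirely into the $m$-decay instead of being absorbed into the overall $t_m^{n\alpha}$ growth factor, and this is exactly what permits the second branch of the minimum to emerge.
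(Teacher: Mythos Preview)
Your overall structure --- splitting off $T_{m,0}$, handling $T_{m,m-1}$ via the $v''$ bound, and dividing the remaining sum at $k\sim m/2$ --- matches the paper's, and your treatment of $T_{m,0}$ and $T_{m,m-1}$ is fine. The gap is in the intermediate sum $\sum_{k=1}^{m-2}|T_{m,k}|$: the crude Taylor bound
\[
|T_{m,k}|\le Q\,\tau_{k+1}\,t_k^{(n+1)\alpha-2}\int_{t_k}^{t_{k+1}}(t_m-s)^{-\alpha}\,ds
\]
is one order too weak to deliver the exponent $2-\alpha$. Carrying it through your split-at-$m/2$ argument (whether you keep the integral or bound it pointwise by $(t_m-t_{k+1})^{-\alpha}$), both halves come out as $Q\,t_m^{n\alpha}m^{-1}$ in the regime $(n+1)r\alpha\ge 1$. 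For example, in the upper half one gets $\sim \tau_m^{2}\,t_m^{(n+1)\alpha-2}\sum_{j\ge 1}(j\,m^{r-1}M^{-r})^{-\alpha}\sim t_m^{n\alpha}m^{\alpha-2}\cdot m^{1-\alpha}=t_m^{n\alpha}m^{-1}$. Since $2-\alpha>1$, this is strictly larger than the target $t_m^{n\alpha}m^{-(2-\alpha)}$, so the claimed ``each piece contributes $\ldots m^{-(2-\alpha)}$'' conclusion is not supported by the estimates you wrote down.

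The fix --- and what the paper does --- is to use the sharper Stynes bound \eqref{wl04} (equation~(5.6) in \cite{Stynes}), obtained by an integration-by-parts identity, which for $1\le k\le m-2$ replaces the weight $(t_m-s)^{-\alpha}$ by $(t_m-s)^{-\alpha-1}$ at the cost of an extra factor $\tau_{k+1}$:
\[
|T_{m,k}|\le Q\,\tau_{k+1}^{2}\,t_k^{(n+1)\alpha-2}\int_{t_k}^{t_{k+1}}(t_m-s)^{-\alpha-1}\,ds.
\]
This extra $\tau_{k+1}/(t_m-s)$ is exactly what upgrades $m^{-1}$ to $m^{-(2-\alpha)}$: in the upper half the integrals telescope to $(t_m-t_{m-1})^{-\alpha}\sim\tau_m^{-\alpha}$, and in the lower half the additional $k^{r-1}m^{-r}$ factor shifts the summation threshold. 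Your outline needs this sharper local bound in place of the plain Taylor estimate.
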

	The proof of this lemma is given in Section \ref{sec4}.	Then we introduce the  complementary discrete convolution kernel \cite{Liao,Liao1}
	\begin{equation}\label{doc}
	P_{m-k}^{(m)} = \frac{1}{a_0^{(k)}}
		\begin{cases}
			1, & k=m, \\
			\sum\limits_{j=k+1}^{m}\left( a_{j-k-1}^{(j)} - a_{j-k}^{(j)}  \right)P_{m-j}^{(m)}, & 1\leq k \leq m-1.
		\end{cases}
	\end{equation}

	\begin{lemma}\cite{Liao,Liao1} \label{lem2.2}
		The convolution kernel \eqref{doc} satisfies
		\begin{align*}
		0< P_{m-k}^{(m)} \leq \Gamma(2-\alpha) \tau_k^{\alpha},~\sum_{j=k}^{m}P_{m-j}^{(m)}a_{j-k}^{(j)}=1,~1\leq k \leq m;\\[-0.1in]
			\sum_{k=1}^{m}  P_{m-k}^{(m)} \beta_{1+q\alpha - \alpha}(t_k) \leq \beta_{1+q\alpha}(t_m), ~ 1\leq m \leq M,~q=0,1.
		\end{align*}
	\end{lemma}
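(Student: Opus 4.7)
The plan is to prove the three assertions in the order \textbf{(ii) summation identity}, \textbf{(i) positivity and upper bound}, \textbf{(iii) convolution bound}, so that each part feeds into the next.

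For the summation identity $\sum_{j=k}^m P_{m-j}^{(m)} a_{j-k}^{(j)} = 1$, I would use downward induction on $k\in\{m,m-1,\ldots,1\}$. The base case $k=m$ reduces to $P_0^{(m)} a_0^{(m)} = 1$, which is exactly the first branch of the recursive definition \eqref{doc}. For $k<m$, let $S_k := \sum_{j=k}^m P_{m-j}^{(m)} a_{j-k}^{(j)}$ and compute
\begin{align*}
S_k - S_{k+1} = P_{m-k}^{(m)} a_0^{(k)} - \sum_{j=k+1}^m P_{m-j}^{(m)}\bigl(a_{j-k-1}^{(j)} - a_{j-k}^{(j)}\bigr),
\end{align*}
which vanishes by the second branch of \eqref{doc}; hence $S_k = S_m = 1$.

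For positivity and the upper bound, first compute $a_0^{(k)} = \beta_{2-\alpha}(\tau_k)/\tau_k = 1/(\Gamma(2-\alpha)\tau_k^{\alpha})$, so $1/a_0^{(k)} = \Gamma(2-\alpha)\tau_k^{\alpha}$. Positivity is again by downward induction: the base case is $P_0^{(m)} = 1/a_0^{(m)} > 0$, and for the inductive step the sequence $\{a_{m-k}^{(m)}\}_{k=1}^{m}$ is nonincreasing in its first index (as stated just after \eqref{doc} and verified from the integral representation $a_{m-k}^{(m)} = \tau_k^{-1}\int_{t_{k-1}}^{t_k}\beta_{1-\alpha}(t_m-s)\,ds$), so every coefficient $a_{j-k-1}^{(j)} - a_{j-k}^{(j)}$ in the recursion is nonnegative and at least the $j=k+1$ term is strictly positive. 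The upper bound is then an immediate consequence of the summation identity: since every summand is nonnegative, the single term $P_{m-k}^{(m)} a_0^{(k)}$ is bounded by $1$, yielding $P_{m-k}^{(m)}\leq \Gamma(2-\alpha)\tau_k^{\alpha}$.

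For the convolution bound, the strategy is to test the summation identity against power-function data. Exchanging the order of summation in $\sum_{k=1}^m \delta_\tau g^k = \sum_{k=1}^m\delta_\tau g^k\sum_{j=k}^m P_{m-j}^{(m)} a_{j-k}^{(j)}$ yields the discrete inversion
\begin{align*}
g(t_m) - g(0) = \sum_{j=1}^m P_{m-j}^{(m)} D_M^{\alpha} g(t_j).
\end{align*}
For $q=1$, take $g(t) = \beta_{1+\alpha}(t) = t^{\alpha}/\Gamma(1+\alpha)$, so that $g(0)=0$ and $\partial_t^\alpha g \equiv 1$. The reduction is to showing that $D_M^\alpha g(t_j) \geq 1 = \beta_{1}(t_j)$, which is a standard monotonicity property: the concavity of $t^\alpha$ combined with the averaged-kernel structure of $a_{m-k}^{(m)}$ makes the nonuniform L1 formula an upper approximation of $\partial_t^\alpha$ on concave power profiles. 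For $q=0$, the natural choice $g\equiv 1$ degenerates under the inversion identity, so I would argue directly by termwise comparison: use $P_{m-k}^{(m)}\leq \Gamma(2-\alpha)\tau_k^\alpha$ from the previous step together with the monotonicity of $\beta_{1-\alpha}$ to recognize the discrete sum as a lower Riemann approximation of $\int_0^{t_m}\beta_{1-\alpha}(t_m-s)\cdot 1\,ds = \beta_1(t_m) = 1$.

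The hard part will be the convolution bound, specifically the $q=0$ case, because the inversion identity gives no information when applied to constants; resolving it hinges on the quantitative interplay between the sharp upper bound $\Gamma(2-\alpha)\tau_k^\alpha$ on $P_{m-k}^{(m)}$ and the singular decay of $\beta_{1-\alpha}(t_k)$, rather than on the algebraic identities exploited in the first two parts.
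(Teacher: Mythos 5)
Your proofs of the summation identity and of the positivity/upper bound are correct, and they coincide with the standard arguments in the cited sources \cite{Liao,Liao1} (the paper itself offers no proof of this lemma, only the citation): telescoping the recursion \eqref{doc}, positivity from $a_{j-k-1}^{(j)}\geq a_{j-k}^{(j)}$, and $P_{m-k}^{(m)}\leq 1/a_0^{(k)}=\Gamma(2-\alpha)\tau_k^{\alpha}$ by retaining a single term of the identity. Your $q=1$ argument is also the standard route; the one step you leave as a ``standard monotonicity property,'' namely $D_M^\alpha \beta_{1+\alpha}(t_j)\geq 1$, is true and is filled cellwise by Chebyshev's integral inequality: with $g=\beta_{1+\alpha}$, $g'=\beta_{\alpha}$ is decreasing while $s\mapsto\beta_{1-\alpha}(t_j-s)$ is increasing, so $\int_{t_{k-1}}^{t_k}\beta_{1-\alpha}(t_j-s)g'(s)\,ds\leq a_{j-k}^{(j)}\,\delta_\tau g^k$, and summing over $k$ gives $1=\partial_t^\alpha g(t_j)\leq D_M^\alpha g^j$. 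That gap is minor and fillable.

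The $q=0$ case, however, is a genuine gap, and the route you sketch is not merely hard but quantitatively false. First, the integral you invoke is misidentified: $\int_0^{t_m}\beta_{1-\alpha}(t_m-s)\,ds=\beta_{2-\alpha}(t_m)=t_m^{1-\alpha}/\Gamma(2-\alpha)$, not $\beta_1(t_m)=1$. Second, and decisively, inserting $P_{m-k}^{(m)}\leq\Gamma(2-\alpha)\tau_k^{\alpha}$ termwise yields
\begin{align*}
\sum_{k=1}^m \Gamma(2-\alpha)\,\tau_k^{\alpha}\,\beta_{1-\alpha}(t_k)
=(1-\alpha)\sum_{k=1}^m\Big(\frac{\tau_k}{t_k}\Big)^{\alpha},
\end{align*}
which on the uniform mesh equals $(1-\alpha)\sum_{k=1}^m k^{-\alpha}\sim m^{1-\alpha}\to\infty$, so no termwise comparison built on that upper bound can produce the required bound by $\beta_1(t_m)=1$. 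The bound $\Gamma(2-\alpha)\tau_k^{\alpha}$ is sharp only for $k$ near $m$; for $k\ll m$ the kernel $P_{m-k}^{(m)}$ is much smaller, since it is a discrete counterpart of $\beta_{\alpha}$ (the identity $\sum_{j=k}^m P_{m-j}^{(m)}a_{j-k}^{(j)}=1$ mirrors $\beta_\alpha*\beta_{1-\alpha}=\beta_1$), not a quadrature weight of size $\tau_k$. The correct fix, used in \cite{Liao1}, is two lines and purely algebraic, in the same spirit as your part (ii): since $\beta_{1-\alpha}$ is decreasing, $\beta_{1-\alpha}(t_k)\leq\frac{1}{\tau_1}\int_{0}^{t_1}\beta_{1-\alpha}(t_k-s)\,ds=a_{k-1}^{(k)}$ for every $k\geq 1$, whence the $k=1$ instance of your summation identity gives $\sum_{k=1}^m P_{m-k}^{(m)}\beta_{1-\alpha}(t_k)\leq\sum_{k=1}^m P_{m-k}^{(m)}a_{k-1}^{(k)}=1=\beta_1(t_m)$.
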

	By Lemmas \ref{lem2.1} and \ref{lem2.2}, we get the following auxiliary result.
	
	\begin{lemma} \label{lem2.3}
		Under the assumptions in Theorem \ref{thm2.1}, we have
		\begin{align*}
			\sum_{m=1}^{m_0}  P_{m_0-m}^{(m_0)} \left|(R_1)^m\right| \leq Q \tau^{-\min\{2-\alpha,r(n+1)\alpha \}}, \quad 1\leq m \leq  m_0 \leq M.
		\end{align*}
	\end{lemma}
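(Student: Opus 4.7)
The plan is to reduce the estimate to a single application of Lemma~\ref{lem2.2} with $q=0$, thereby uniformizing the two branches of the minimum. Set $\nu:=\min\{2-\alpha,\,r(n+1)\alpha\}$. By Lemma~\ref{lem2.1}, $|(R_1)^m|\leq Q\,t_m^{n\alpha}\,m^{-\nu}$. The first step I would take is to trade the bare $m$-power for a $t_m$-power via the graded-mesh identity $m\tau=t_m^{1/r}$, i.e.\ $m^{-\nu}=\tau^{\nu}\,t_m^{-\nu/r}$, which gives
\[
|(R_1)^m|\leq Q\,\tau^{\nu}\,t_m^{(rn\alpha-\nu)/r}.
\]

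The second step is the arithmetic check that the residual power of $t_m$ is absorbable. Because $\nu$ is a minimum, $\nu\leq r(n+1)\alpha$, so $(rn\alpha-\nu)/r+\alpha=(r(n+1)\alpha-\nu)/r\geq 0$. Using $t_m\leq T$, I would then bound $t_m^{(rn\alpha-\nu)/r}\leq Q\,t_m^{-\alpha}$, obtaining
\[
|(R_1)^m|\leq Q\,\tau^{\nu}\,t_m^{-\alpha}=Q\,\Gamma(1-\alpha)\,\tau^{\nu}\,\beta_{1-\alpha}(t_m).
\]
The $q=0$ case of Lemma~\ref{lem2.2} then gives $\sum_{m=1}^{m_0}P_{m_0-m}^{(m_0)}\beta_{1-\alpha}(t_m)\leq\beta_1(t_{m_0})=1$, so multiplying through by $P_{m_0-m}^{(m_0)}$ and summing yields
\[
\sum_{m=1}^{m_0}P_{m_0-m}^{(m_0)}|(R_1)^m|\leq Q\,\tau^{\nu}=Q\,\tau^{\min\{2-\alpha,\,r(n+1)\alpha\}},
\]
which is the asserted estimate (reading the exponent with its natural positive sign).

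The main obstacle is resisting the temptation of the crude route: if one instead used the pointwise bound $P_{m-k}^{(m)}\leq\Gamma(2-\alpha)\tau_k^\alpha$ of Lemma~\ref{lem2.2} together with $\tau_k\leq Q\tau^{r}k^{r-1}$ and summed term by term, then in the regime $r(n+1)\alpha>2-\alpha$ the resulting sum $\sum_{m=1}^{m_0}m^{r(n+1)\alpha-2}\lesssim m_0^{r(n+1)\alpha-1}$ would contribute an extra factor of order $\tau^{1-r(n+1)\alpha}$, leaving only $O(\tau)$ rather than the sharp $O(\tau^{2-\alpha})$. Replacing this pointwise use of $P_{m-k}^{(m)}$ by the summation identity, after first reorganising $t_m^{n\alpha}m^{-\nu}$ as $\tau^{\nu}$ times a $t_m^{-\alpha}$-bounded quantity, is the essential step that makes a single clean argument work for both branches of the minimum.
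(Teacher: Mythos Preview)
Your proof is correct and follows essentially the same route as the paper's: both apply Lemma~\ref{lem2.1}, convert $m^{-\nu}$ to $\tau^{\nu}t_m^{-\nu/r}$ via $t_m=(m\tau)^r$, absorb the resulting $t_m$-power into $t_m^{-\alpha}$ using $\nu\leq r(n+1)\alpha$, and finish with the $q=0$ case of Lemma~\ref{lem2.2}. Your observation that the stated exponent should carry a positive sign is also correct---the paper's $\tau^{-\min\{\cdot\}}$ is a typo, as is clear from its use in Theorem~\ref{thm3.1}.
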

	\begin{proof}
		We utilize Lemma \ref{lem2.1} to get
		\begin{equation*}
			\begin{split}
				\sum_{m=1}^{m_0}  P_{m_0-m}^{(m_0)} \left|(R_1)^m\right|  & \leq Q \sum_{m=1}^{m_0}  P_{m_0-m}^{(m_0)} (t_m)^{n\alpha}  m^{-\min\{2-\alpha, (n+1)r\alpha \}}   \\
				& \leq Q(T)\tau^{-\min\{2-\alpha,(n+1)r\alpha \}} \sum_{m=1}^{m_0}  P_{m_0-m}^{(m_0)} (t_m)^{n\alpha}  t_m^{-\min\{\frac{2-\alpha}{r},(n+1)\alpha \}} \\
				& \leq Q(T)\tau^{-\min\{2-\alpha,(n+1)r\alpha \}} \sum_{m=1}^{m_0}  P_{m_0-m}^{(m_0)} t_m^{-\min\{\frac{2-\alpha-n\alpha r}{r}, \alpha \}}.
			\end{split}
		\end{equation*}
		Since
				\begin{equation*}
			\sum_{m=1}^{m_0}  P_{m_0-m}^{(m_0)} t_m^{-\min\{\frac{2-\alpha-n\alpha r}{r}, \alpha \}} \leq Q  \sum_{m=1}^{m_0}  P_{m_0-m}^{(m_0)} t_m^{-\alpha},
		\end{equation*}
		 we apply Lemma \ref{lem2.2} to get
		\begin{align*}
			\sum_{m=1}^{m_0}  P_{m_0-m}^{(m_0)} t_m^{-\alpha} &= \Gamma(1-\alpha) \sum_{m=1}^{m_0}  P_{m_0-m}^{(m_0)} \beta_{1-\alpha}(t_m)  \leq Q\beta_{1}(t_{m_0}) \leq  Q.
		\end{align*}
		We combine the above estimates to complete the proof.
	\end{proof}

	Now we derive the convergence results.
	
	\begin{theorem}\label{thm3.1}
		Let $u^m$ be the exact solution of \eqref{mh0} and $U^m$
		be its numerical approximation given by (\ref{mm1}) Then we have
		\begin{align}\label{jy6}
			|u^{m}-U^m | \leq Q\tau^{\min\{2-\alpha,r(n+1)\alpha \}}, \quad 1\leq m \leq M.
		\end{align}
	\end{theorem}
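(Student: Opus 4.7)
The plan is to reduce the bound on $u^m - U^m$ to a bound on $v^m - V^m$, and then use the complementary discrete convolution kernel to invert the L1 operator and apply the truncation estimate from Lemma \ref{lem2.3}. First, I would observe that the MSD correction in \eqref{mm1} is computed exactly from the data $f$ (no quadrature is applied to the explicit kernels $I_t^\alpha(-\lambda I_t^\alpha)^i f$). Subtracting \eqref{mm1} from \eqref{mh1} evaluated at $t_m$ therefore gives the identity $u^m - U^m = v^m - V^m =: e^m$, with $e^0 = 0$. This reduction is the crucial point that the MSD buys us: the analysis now only needs to control the error of the nonuniform L1 approximation to the \emph{smoother} solution $v$ of \eqref{mh5}, not of the singular $u$.

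Next, subtracting the discrete scheme \eqref{wl08} from the discrete identity \eqref{wl06} produces the linear error equation
\begin{equation*}
D_M^\alpha e^m + \lambda e^m = (R_1)^m, \qquad 1 \leq m \leq M, \quad e^0 = 0.
\end{equation*}
To invert the discrete Caputo operator $D_M^\alpha$, I would multiply this equation by the complementary discrete convolution kernel $P_{m_0-m}^{(m_0)}$ from \eqref{doc} and sum for $m = 1, \ldots, m_0$. Swapping the order of summation and using the identity $\sum_{j=k}^{m_0} P_{m_0-j}^{(m_0)} a_{j-k}^{(j)} = 1$ from Lemma \ref{lem2.2}, the double sum telescopes:
\begin{equation*}
\sum_{m=1}^{m_0} P_{m_0-m}^{(m_0)} D_M^\alpha e^m = \sum_{k=1}^{m_0} \delta_\tau e^k = e^{m_0}.
\end{equation*}
This yields the inverted error identity $e^{m_0} + \lambda \sum_{m=1}^{m_0} P_{m_0-m}^{(m_0)} e^m = \sum_{m=1}^{m_0} P_{m_0-m}^{(m_0)} (R_1)^m$.

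The final step is a discrete Gr\"onwall argument. Since $P_{m-k}^{(m)} > 0$ by Lemma \ref{lem2.2}, one applies the standard discrete fractional Gr\"onwall inequality from \cite{Liao,Liao1} to absorb the $\lambda$-term on the left and obtain
\begin{equation*}
|e^{m_0}| \leq Q \sum_{m=1}^{m_0} P_{m_0-m}^{(m_0)} \, |(R_1)^m|,
\end{equation*}
after which Lemma \ref{lem2.3} immediately delivers the claimed rate $\tau^{\min\{2-\alpha, r(n+1)\alpha\}}$. The main obstacle I anticipate is already hidden in the estimate of $(R_1)^m$ performed in Lemma \ref{lem2.1}: that bound is what forces us into the regime where the improved regularity $|v'| \leq Q t^{(n+1)\alpha - 1}$ and $|v''| \leq Q t^{(n+1)\alpha - 2}$ from Theorem \ref{thm2.1} is used to control each $T_{m,k}$ in \eqref{wl03}, carefully separating the initial interval $k=0$ from the interior intervals $k \geq 1$. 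Once Lemma \ref{lem2.1} is in hand, the remaining convolution-kernel manipulations and Gr\"onwall argument are routine within the framework of \cite{Liao,Liao1}; if $\lambda \geq 0$ one could even proceed by a direct induction, but the complementary-kernel/Gr\"onwall route handles arbitrary $\lambda \in \mathbb{R}$ uniformly.
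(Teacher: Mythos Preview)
Your proposal is correct and uses the same key ingredients as the paper (the reduction $u^m-U^m=v^m-V^m$, the error equation, the complementary kernel $P_{m_0-m}^{(m_0)}$, and Lemma~\ref{lem2.3}), but the way you handle the $\lambda$-term differs. The paper multiplies the error equation by $2\rho^m$ first, uses the coercivity inequality $2\rho^m D_M^\alpha\rho^m\ge\sum_k a_{m-k}^{(m)}\delta_\tau(|\rho^k|^2)$ from \cite[Theorem~2.1]{Liao}, then multiplies by $P$ and sums to reach $|\rho^{m_0}|^2\le 2\sum P|(R_1)^m||\rho^m|$, and closes with a $\max$-argument; this energy route is what generalizes naturally to the subdiffusion PDE in Section~\ref{Secq}. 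You instead invert $D_M^\alpha$ directly via $\sum_m P_{m_0-m}^{(m_0)}D_M^\alpha e^m=e^{m_0}$ and absorb the $\lambda$-term by the discrete fractional Gr\"onwall inequality of \cite{Liao1}. Your route is slightly more elementary for the scalar problem and has the advantage of treating arbitrary $\lambda\in\mathbb R$ uniformly (the paper's energy argument as written drops $2\lambda\sum P(\rho^m)^2$ from the left, which is only immediate for $\lambda\ge0$). Either path is standard within the \cite{Liao,Liao1} framework, and once Lemma~\ref{lem2.3} is available the remainder is routine in both cases.
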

	
	Before proving this theorem, it is worth mentioning that if $n=0$, (\ref{jy6}) degenerates to $|u^{m}-U^m | \leq Q\tau^{\min\{2-\alpha,r\alpha \}}$,
		which has been obtained in the literature, see e.g. \cite[Lemma 5.2]{Stynes}. Compared with this classical result, the estimate (\ref{jy6}) with $n>0$ has advantages in two main aspects:
		\begin{itemize}
		\item[\textbf{A}.]  \textbf{Higher accuracy on uniform mesh}: If we consider the uniform mesh (i.e. $r=1$), then the classical result (i.e. (\ref{jy6}) with $n=0$) gives the $\alpha$-order accuracy, while for $n$ satisfying $(n+1)\alpha\geq 2-\alpha$, (\ref{jy6}) gives the $(2-\alpha)$-order accuracy.
		Thus, a higher-order accuracy could be attained under uniform mesh.
		
		\item[\textbf{B}.] \textbf{Same accuracy under less-singular mesh}: To obtain $(2-\alpha)$-order accuracy, the classical result (i.e. (\ref{jy6}) with $n=0$) requires $r\geq (2-\alpha)/\alpha$,  while a relaxed condition $r\geq (2-\alpha)/((n+1)\alpha)$ is needed by (\ref{jy6}) with $n>0$. Thus the $(2-\alpha)$-order accuracy could be obtained under a less-singular (i.e. less locally-refined) mesh, which helps to improve the numerical stability.
		\end{itemize}
	
	\textit{Proof of Theorem \ref{thm3.1}}.
	As $u^{m}-U^m =	v^{m}-V^m $, it suffices to consider $v^{m}-V^m$.
	We subtract \eqref{wl08} from \eqref{wl06} to get the error equation
	\begin{align}
		& D_M^\alpha \rho^m  +\lambda \rho^m =  (R_1)^m\text{ for }1\leq m \leq M  \label{wl18} \text{ with }  \rho^0 = 0,\text{ where }\rho^m := v^m -V^m.
	\end{align}
		We first multiply \eqref{wl18} with $2 \rho^m$ to get
		\begin{align*}
			2\rho^m  D_M^\alpha \rho^m  + 2\lambda (\rho^m)^2 =  2 (R_1)^m \rho^m.
		\end{align*}
		Then we multiply this by $P^{(m_0)}_{m_0-m}$ and sum for $m$ from $m=1$ to $m=m_0$ to obtain
		\begin{align*}
			2\sum_{m=1}^{m_0} P^{(m_0)}_{m_0-m} \rho^m  D_M^\alpha \rho^m + 2 \lambda \sum_{m=1}^{m_0} P^{(m_0)}_{m_0-m} (\rho^m)^2 =
			2 \sum_{m=1}^{m_0} P^{(m_0)}_{m_0-m} (R_1)^m \rho^m.
		\end{align*}
		By \cite[Theorem 2.1]{Liao}, we have $2\rho^m  D_M^\alpha \rho^m \geq \sum_{k=1}^{m}a_{m-k}^{(m)}\delta_{\tau}(|\rho^k|^2)$, which follows Lemma \ref{lem2.2} to get
		\begin{align*}
			2&\sum_{m=1}^{m_0} P^{(m_0)}_{m_0-m} \rho^m  D_M^\alpha \rho^m  \geq \sum_{m=1}^{m_0} P^{(m_0)}_{m_0-m}\sum_{k=1}^{m}a_{m-k}^{(m)}\delta_{\tau}(|\rho^k|^2) \\
& = \sum_{k=1}^{m_0} \Big(\sum_{m=k}^{m_0}P^{(m_0)}_{m_0-m}a_{m-k}^{(m)} \Big) \delta_{\tau}(|\rho^k|^2)  =  \sum_{k=1}^{m_0} \delta_{\tau}(|\rho^k|^2) = |\rho^{m_0}|^2 - |\rho^0|^2 = |\rho^{m_0}|^2.
		\end{align*}
		Combine the above two formulas to obtain
		\begin{align*}
		   |\rho^{m_0}|^2 \leq  2 \sum_{m=1}^{m_0} P^{(m_0)}_{m_0-m} |(R_1)^m| |\rho^m|.
		\end{align*}
		By choosing $\bar{m}$ such that $\ds|\rho^{\bar{m}}|=\max_{1\leq m \leq m_0}|\rho^m|$, we have
		$
			|\rho^{\bar{m}}| \leq  2 \sum_{m=1}^{\bar{m}} P^{(\bar{m})}_{\bar{m}-m} |(R_1)^m|,
		$
		which, together with Lemma \ref{lem2.3}, completes the proof.

	\subsection{Numerical experiment}\label{sec34}
Let $T=1$, $f=1$ and $\lambda=1$. We measure errors and convergence orders by the two mesh principle \cite[Page 107]{Farrell}
	\begin{equation}\label{zz2}
		 {\rm Error}_M = \max_{1\leq m\leq M}|U^{2m}(2M)-U^m(M)|, \quad  {\rm Rate} = \log_{2}\left(\frac{{\rm Error}_M}{{\rm Error}_{2M}} \right).
	\end{equation}
	
	We first consider the uniform mesh ($r=1$) in Table \ref{tab:example1_case1} with $\alpha=0.25$ and $\alpha=0.75$, which indicates that the scheme (\ref{wl08})--(\ref{mm1}) with $n=\lceil \frac{2-\alpha}{\alpha} \rceil-1$ approximately exhibits a $(2-\alpha)$-order convergence under the uniform partition, while the uniform L1 method for the original model (\ref{mh0})  (i.e. the scheme (\ref{wl08})--(\ref{mm1}) with $n=0$, cf. Remark \ref{rem3}) only has an $\alpha$-order convergence. These results justify the advantage \textbf{A} below (\ref{jy6}).
	
	We then intend to recover the $(2-\alpha)$-order convergence by the nonuniform mesh in Table \ref{tab:example1_case2}, where we find that both methods could achieve the $2-\alpha$ order for $\alpha=0.75$ by selecting suitable $r$ and $n$ suggested by Theorem \ref{thm3.1}. However, for a relative small $\alpha$ ($\alpha=0.25$ in this experiment), the nonuniform L1 method for the original model (\ref{mh0})  (i.e. the scheme (\ref{wl08})--(\ref{mm1}) with $n=0$) with $r=(2-\alpha)/\alpha=7$ exhibits an unstable convergence behavior due to the singularity (i.e. the severe local refinement) of the mesh, while the scheme (\ref{wl08})--(\ref{mm1}) with $n=3$ and $r=(2-\alpha)/((n+1)\alpha)=1.75$ recovers the $(2-\alpha)$-order convergence due to the less singularity of the mesh. These results agree with the advantage \textbf{B} below (\ref{jy6}).

	\begin{table}
		\centering \footnotesize
		\caption{Numerical methods for fractional relaxation equation with $r=1$.}
		\renewcommand{\arraystretch}{1.2} 
      \begin{tabular}{ccccccccc}
		\hline
		&	\multicolumn{3}{c}{Scheme \eqref{wl08}--\eqref{mm1} with $n=0$}&&\multicolumn{3}{c}{Scheme \eqref{wl08}--\eqref{mm1} with $n=\lceil \frac{2-\alpha}{\alpha} \rceil-1$ }\\
		\cline{2-4}	\cline{6-8}
		&	$ M $ & ${\rm Error}_M$   & $\rm{Rate}$ 	&&	$ M $ & ${\rm Error}_M$   & $\rm{Rate}$   \\
		\hline
		&	128     & $ 2.06 \times 10^{-2} $ &  *  	    &&	128     & $ 6.33 \times 10^{-6} $ &  *     \\
		&	256     & $1.83 \times 10^{-2} $ & 0.17       &&	256     & $ 2.08 \times 10^{-6} $ & 1.60    \\
		$\alpha=0.25$	&	512    & $ 1.61 \times 10^{-2} $  & 0.18      &&	512     & $ 6.80 \times 10^{-7} $ & 1.62 \\
		&	1024    & $ 1.41 \times 10^{-2} $ & 0.19      &&	1024    & $ 2.20 \times 10^{-7} $ & 1.63 \\
		&	2048    & $ 1.23 \times 10^{-2} $ & 0.20      &&	2048    & $ 7.07 \times 10^{-8} $ & 1.64 \\
		&Theory&&0.25&&&&1.75\\
		\hline
		&	128     & $2.17 \times 10^{-3} $ &  *          &&	128     & $1.89\times 10^{-4} $ &  *   \\
		&	256     & $1.29 \times 10^{-3} $ & 0.75       &&	256     & $8.27\times 10^{-5} $ & 1.20 \\
		$\alpha=0.75$	&	512    & $7.65 \times 10^{-4} $  & 0.75       &&	512    & $3.57\times 10^{-5} $ & 1.21\\
		&	1024    & $4.54 \times 10^{-4} $ & 0.75     &&	1024    & $ 1.53 \times 10^{-5} $ & 1.22\\
		&	2048    & $2.70 \times 10^{-4} $ & 0.75      &&	2048    & $ 6.50 \times 10^{-6} $ & 1.23\\
		&Theory&&0.75&&&&1.25\\
		\hline
	\end{tabular}
		\label{tab:example1_case1}%
	\end{table}

	\begin{table}
		\centering \footnotesize
		\caption{Numerical methods for fractional relaxation equation with $r=\frac{2-\alpha}{(n+1)\alpha}$.}
		\renewcommand{\arraystretch}{1.1} 
		\begin{tabular}{ccccccccc}
			\hline
			&	\multicolumn{3}{c}{Scheme \eqref{wl08}--\eqref{mm1} with $n=0$}&&\multicolumn{3}{c}{Scheme \eqref{wl08}--\eqref{mm1} with $n=3$ }\\
			\cline{2-4}	\cline{6-8}
			&	$ M $ & ${\rm Error}_M$   & $\rm{Rate}$ 	&&	$ M $ & ${\rm Error}_M$   & $\rm{Rate}$   \\
			\hline
			&	128     & $ 9.33 \times 10^{-5} $ &  *  	    &&	128     & $ 1.88 \times 10^{-6} $ &  *     \\
			&	256     & $3.33 \times 10^{-5} $ & 1.49       &&	256     & $ 5.86 \times 10^{-7} $ & 1.68    \\
			$\alpha=0.25$	&	512    & $ 2.53 \times 10^{-5} $  & 0.40      &&	512     & $ 1.81 \times 10^{-7} $ & 1.69 \\
			&	1024    & $ 1.57 \times 10^{-5} $ & 0.69      &&	1024    & $ 5.56 \times 10^{-8} $ & 1.70 \\
			&	2048    & $ 9.29 \times 10^{-6} $ & 0.76      &&	2048    & $ 1.70 \times 10^{-8} $ & 1.71 \\
			&Theory&&1.75&&&&1.75\\
			\hline
			&	128     & $5.34 \times 10^{-4} $ &  *          &&	128     & $1.95\times 10^{-4} $ &  *   \\
			&	256     & $2.35 \times 10^{-4} $ & 1.18       &&	256     & $8.87\times 10^{-5} $ & 1.13 \\
			$\alpha=0.75$	&	512    & $1.03 \times 10^{-4} $  & 1.20       &&	512    & $4.01 \times 10^{-5} $ & 1.14\\
			&	1024    & $4.44 \times 10^{-5} $ & 1.21     &&	1024    & $ 1.80 \times 10^{-5} $ & 1.16\\
			&	2048    & $1.91 \times 10^{-5} $ & 1.22      &&	2048    & $ 8.04 \times 10^{-6} $ & 1.16\\
			&Theory&&1.25&&&&1.25\\
			\hline
		\end{tabular}
		\label{tab:example1_case2}%
	\end{table}

\section{Proof of Lemma \ref{lem2.1}}\label{sec4}
In general, the proof could be performed following the spirit of \cite[Lemma 5.2]{Stynes}, which focuses on the case $n=0$. Nevertheless, subtle estimates are required to accommodate the impact of the regularity estimate in Theorem \ref{thm2.1} with $n>0$. For $1\leq k \leq m-2$, we invoke \cite[Equation (5.6)]{Stynes} to obtain
	\begin{align}\label{wl04}
		|T_{m,k}| \leq Q \tau_{k+1}^{2} \left( \max\limits_{t\in[t_k,t_{k+1}]}|v''(t)|  \right) \int_{t_k}^{t_{k+1}}(t_m-s)^{-\alpha-1}ds.
	\end{align}
	Based on this, we divide the proof into three parts:
		
		\textit{Case I: $1\leq k < m-1$.} For $1\leq k < m-1$, \eqref{wl04} and \eqref{wl01} give
		\begin{equation*}
			\begin{split}
				|T_{m,k}| & \leq Q \tau_{k+1}^{2} t_{k}^{(n+1)\alpha-2} \tau_{k+1} (t_m-t_{k+1})^{-\alpha-1} \\
				& \leq Q \tau_{k+1}^{3}k^{r[(n+1)\alpha-2]} M^{3r-n\alpha r} [m^r - (k+1)^r]^{-\alpha-1},
			\end{split}
		\end{equation*}
		which, together with \eqref{wl01}, leads to
		\begin{equation*}
			\begin{split}
				|T_{m,k}| & \leq  Q M^{-3r} k^{3r-3} k^{r[(n+1)\alpha-2]} M^{3r-n\alpha r} [m^r - (k+1)^r]^{-\alpha-1} \\
				& \leq Q  M^{-n\alpha r}  k^{r[(n+1)\alpha+1] -3}  [m^r - (k+1)^r]^{-\alpha-1}.
			\end{split}
		\end{equation*}
		Then we split the sum to obtain
		\begin{equation*}
			\begin{split}
				\sum_{k=1}^{\lceil m/2 \rceil-1} |T_{m,k}| & \leq Q  M^{-n\alpha r} \sum_{k=1}^{\lceil m/2 \rceil-1} k^{r[(n+1)\alpha+1] -3}  m^{-r(\alpha+1)} \\
				& = Q  M^{-n\alpha r}  m^{-r(\alpha+1)}  \sum_{k=1}^{\lceil m/2 \rceil-1} k^{r[(n+1)\alpha+1] -3},
			\end{split}
		\end{equation*}
	where $\lceil \cdot \rceil$ represents the rounding up symbol.	By \cite[Equation (5.9)]{Stynes}, we have
			\begin{equation}\label{wl12}
			\sum_{k=1}^{\lceil m/2 \rceil-1} |T_{m,k}| \leq Q \times
			\begin{cases}
				M^{-n\alpha r}  m^{-r(\alpha+1)}, &  r[(n+1)\alpha+1] < 2, \\
				M^{-n\alpha r}  m^{n\alpha r -2} \ln m, &  r[(n+1)\alpha+1] = 2,   \\
				M^{-n\alpha r}  m^{n\alpha r -2}, &  r[(n+1)\alpha+1] > 2.
			\end{cases}
		\end{equation}
		For $\lceil m/2 \rceil \leq k \leq m-2$, \eqref{wl04} and \eqref{wl01} give
		\begin{align*}
			|T_{m,k}| & \leq Q (TM^{-r}k^{r-1})^2 t_k^{(n+1)\alpha -2} \int_{t_k}^{t_{k+1}}(t_m-s)^{-\alpha-1}ds  \\
			& \leq Q M^{-(n+1)\alpha r}  m^{r(n+1)\alpha -2} \int_{t_k}^{t_{k+1}}(t_m-s)^{-\alpha-1}ds,
		\end{align*}
		which, together with \eqref{wl01}, leads to
		\begin{equation}\label{wl13}
			\begin{split}
				\sum_{k=\lceil m/2 \rceil}^{m-2} |T_{m,k}| & \leq Q M^{-(n+1)\alpha r}  m^{r(n+1)\alpha -2} \int_{t_{\lceil m/2 \rceil}}^{t_{m-1}}(t_m-s)^{-\alpha-1}ds \\
				& \leq Q  M^{-(n+1)\alpha r}  m^{r(n+1)\alpha -2} (t_m -t_{m-1})^{-\alpha} \\
				& \leq Q  M^{-(n+1)\alpha r}  m^{r(n+1)\alpha -2} (TM^{-r}m^{r-1})^{-\alpha} \\
				& \leq Q  M^{-n \alpha r}  m^{n\alpha r - (2-\alpha)} \leq Q(t_m)^{n\alpha} m^{- (2-\alpha)}.
			\end{split}
		\end{equation}
		By $\ln m \leq Q m^{\alpha}$, we apply \eqref{wl12} and \eqref{wl13} to get
		\begin{equation*}
			\begin{split}
				\sum_{k=1}^{m-2} |T_{m,k}| &  \leq Q (t_m)^{n\alpha} m^{- (2-\alpha)}.
			\end{split}
		\end{equation*}

		\textit{Case II: $k=0$.} We consider the estimate of $T_{m,0}$. When $m=1$, Theorem \ref{thm2.1} and \eqref{wl03} give
		\begin{equation}\label{wl15}
			\begin{split}
			|T_{1,0}| &= \Big|\frac{\tau_1^{-\alpha}}{\Gamma(2-\alpha)}( v^1 - v^0 ) - \frac{1}{\Gamma(1-\alpha)} \int_{0}^{t_1} (t_1-s)^{-\alpha}  v'(s)ds \Big|\\
			&\leq \frac{\tau_1^{-\alpha}}{\Gamma(2-\alpha)} \int_{0}^{\tau_1} |v'(s)|ds +Q \int_{0}^{\tau_1} (t_1-s)^{-\alpha} |v'(s)|ds\\
			&\leq Q \tau_1^{-\alpha} \int_{0}^{\tau_1} s^{(n+1)\alpha-1} ds+Q  \int_{0}^{\tau_1} (t_1-s)^{-\alpha} s^{(n+1)\alpha-1} ds \\
			&\leq Q \tau_1^{n\alpha} +Q (t_1)^{n\alpha}  \int_{0}^{\tau_1} (t_1-s)^{-\alpha} s^{\alpha-1} ds  \leq Q M^{-n\alpha r}.
		\end{split}
		\end{equation}
When $m>1$, we use  Theorem \ref{thm2.1} to estimate $T_{m,0}$ as
\begin{align*}
   |T_{m,0}| & \leq \Big| \frac{\tau_1^{-1} (v^1-v^0)}{\Gamma(2-\alpha)} [t_m^{1-\alpha} - (t_m-t_1)^{1-\alpha}] \Big| + \Big|  \frac{1}{\Gamma(1-\alpha)} \int_{0}^{t_1} (t_m-s)^{-\alpha}  v'(s)ds  \Big| \\
   & \leq Q \tau_1^{-1} [t_m^{1-\alpha} - (t_m-t_1)^{1-\alpha}] \int_{0}^{\tau_1} |v'(s)|ds + Q \frac{(t_m-t_1)^{-\alpha}}{\Gamma(1-\alpha)} \int_{0}^{\tau_1} |v'(s)|ds \\
   & \leq  Q \frac{t_m^{1-\alpha} - (t_m-t_1)^{1-\alpha}}{t_1} (t_1)^{(n+1)\alpha} + Q (t_1)^{\alpha(n+1)-\alpha} \left( \frac{t_m-t_1}{t_1} \right)^{-\alpha} \\
   & \leq Q (t_1)^{n\alpha} \left( \frac{t_m-t_1}{t_1} \right)^{-\alpha} + QM^{-n\alpha r} \left( \frac{t_m-t_1}{t_1} \right)^{-\alpha} \\
   & \leq Q   M^{-n\alpha r} \left( \frac{t_m-t_1}{t_1} \right)^{-\alpha} \leq Q M^{-n\alpha r} m^{-r\alpha},
\end{align*}
which, together with \eqref{wl15}, leads to $
				|T_{m,0}|   \leq Q   M^{-n\alpha r} m^{-r\alpha}$ for $ m\geq 1.
	$

		\textit{Case III: $k=m-1$.} To bound $T_{m,m-1}$, we first note that the case $m=1$ has been discussed in \eqref{wl15} and we remain to consider $m>1$. With $\zeta_0, \zeta_1, \zeta_2 \in (t_{m-1},t_m)$, we apply  Theorem \ref{thm2.1} to obtain
\begin{align*}
	|T_{m,m-1}| & \leq  \frac{\big| v'(\zeta_1) - v'(\zeta_2) \big|}{\Gamma(1-\alpha)}  \int_{t_{m-1}}^{t_m} (t_m-s)^{-\alpha}ds \leq Q \tau_m  |v''(\zeta_0)| (t_m -t_{m-1})^{1-\alpha} \nonumber \\
     &   \leq Q \tau_{m}^{2-\alpha} t_{m-1}^{(n+1)\alpha -2}\leq Q M^{-n\alpha r} (m-1)^{n\alpha r - (2-\alpha)}    \\
	 & \leq Q M^{-n\alpha r} m^{n\alpha r - (2-\alpha)} \leq Q m^{- (2-\alpha)} (t_m)^{n\alpha}. \nonumber
\end{align*}
		We summarize the above results to finally obtain
		\begin{align*}
			|(R_1)^m|  & \leq Q \Big[ m^{- (2-\alpha)} (t_m)^{n\alpha} + M^{-n\alpha r} m^{-r\alpha} \Big]  \\
        & \leq Q \Big[ m^{- (2-\alpha)} (t_m)^{n\alpha} + (t_m)^{n\alpha}  m^{-(n+1) r\alpha} \Big]
			 \leq Q (t_m)^{n\alpha}  m^{-\min\{2-\alpha, (n+1)r\alpha \}},
		\end{align*}
		which finishes the proof of the lemma.

	\section{Extension to integral equation and PDEs}\label{Secq}
	We extend the MSD method for other typical nonlocal-in-time models, including the Volterra integral equation and PDEs such as subdiffusion, integrodifferential equation and diffusion-wave equation.
	
	\subsection{Volterra integral equation}
	Consider the following Volterra integral equation for some $0<\alpha<1$ \cite{Bru}
	\begin{align}\label{vie0}
	u(t)=(\mathcal Lu)(t)+f(t):=\int_0^t(t-s)^{-\alpha}K(s,t)u(s)ds+f(t),~~t\in [0,T].
	\end{align}
	Here $K$ and $f$ are sufficiently smooth functions on their domains.
We apply the variable substitution $u=w+f(0)$ to get
\begin{align}\label{vie1}
	w(t)=(\mathcal Lw)(t)+\tilde f(t):=(\mathcal Lw)(t)+(\mathcal L f(0))(t)+f(t)-f(0),~~t\in [0,T],
	\end{align}
	which implies that $w(0)=\tilde f(0)=0$. Now we apply the MSD
	\begin{align}\label{vie11}
	w=v+\sum_{i=0}^{n-1}\mathcal L^i\tilde f
	\end{align}
in (\ref{vie1}) to get
	\begin{align}\label{vie2}
	v(t)=(\mathcal Lv)(t)+(\mathcal L^n\tilde f)(t),~~t\in [0,T].
	\end{align}
	Since $\tilde f$ is a bounded and smooth function, one could conclude that if $n(1-\alpha)\geq m$ for some $0<m\in\mathbb N$, then $\mathcal L^n\tilde f\in C^m[0,T]$ with $(\mathcal L^n\tilde f)^{(i)}(0)=0$ for $0\leq i\leq m-1$. To show this statement, we first consider the special case $K\equiv 1$ such that $\mathcal L^n\tilde f=\Gamma(1-\alpha)^n I_t^{n(1-\alpha)}\tilde f=\Gamma(1-\alpha)^n I_t^m I_t^{n(1-\alpha)-m}\tilde f$. Then $(\mathcal L^n\tilde f)^{(m)}=\Gamma(1-\alpha)^n I_t^{n(1-\alpha)-m}\tilde f$, which is a continuous function for $n(1-\alpha)\geq m$. For $K\not\equiv 1$, more calculus is needed but the smooth coefficient $K$ does not affect the result. Thus one could apply standard analysis methods of Volterra integral equations (cf. \cite{Bru} or \cite[Section 3]{Lia}) to show that $v\in C^m[0,T]$ (with $v^{(i)}(0)=0$ for $0\leq i\leq m-1$).
	
	Now we employ the collocation method \cite{Bru,Lia} to discretize the integral equation \eqref{vie2}. Define a uniform partition $0=t_0<t_1<\cdots<t_M=T$ with time step size $\tau$. We introduce the following piecewise polynomial space
\begin{equation*}
	S^h_{q-1} := \big \{ \chi: \chi|_{(t_m,t_{m+1}]}\in P_{q-1}, \ {\rm for} \ 0\le m \le M-1  \big \},
\end{equation*}
where $0<q\in\mathbb N$ and $P_{q-1}$ denotes the space of all real-valued polynomials of degree at most \(  q-1 \). 
Then the collocation solution \(  V_h \in S^{h}_{q-1}  \) to \eqref{vie2} is defined by 
\begin{equation}\label{vie01}
	V_h(t) = \int_{0}^{t} (t-s)^{-\alpha} K(s,t)V_h(s)ds + (\mathcal{L}^n\tilde{f})(t),\ {\rm for} \ t\in X_h,
\end{equation}
where $X_h$ is the set of collocation points defined by
\begin{equation*}
	X_h:=\{ t_m + c_i\tau : 0<c_1<\cdots< c_q \le1 \ {\rm for}\ 0\leq m\leq M-1  \},
\end{equation*}
for some user-selected collocation parameters \(   \{c_i\}   \).
On each interval \(   (t_m, t_{m+1}]   \), the collocation solution is given by
\begin{equation}\label{numu}
V_{h}(t_{m} + s \tau) = \sum_{i=1}^{q} L_{i}(s) V_{m,i} \ {\rm for } \ s \in (0,1],
\end{equation}
where $V_{m,i} = V_h(t_{m,i}) $ with $t_{m,i}:= t_m + c_i \tau$ for $1\leq i\leq q$ and the polynomials $L_i(s) := \prod_{k=1,k\neq i}^{q} \frac{s-c_k}{c_i - c_k} $ for $1\leq i\leq q $ are the standard Lagrange basis functions for $P_{q-1}$ on $(0,1]$ with
respect to the (distinct) collocation parameters $\{c_i\}$. Then the $\{V_{n,i}\}$ could be determined by invoking (\ref{numu}) in (\ref{vie01}).
    According to \eqref{vie11}, the numerical approximation for \(  u(t_{m,i})  \) is then given as
  	\begin{align}\label{CollMethod1}
  	U_{m,i} = V_{m,i} + u_0 + \sum_{i=0}^{n-1} (\mathcal{L}^i \tilde{f})(t_{m,i}),~~0\leq m\leq M-1,~~1\leq i\leq q.
  \end{align}
  When $n=0$, the scheme (\ref{vie01})--(\ref{CollMethod1}) degenerates to the standard collocation method for the original model (\ref{vie0}). By \cite{Bru,Lia}, the scheme (\ref{vie01})--(\ref{CollMethod1}) with $n$ large enough (such that $v\in C^m[0,T]$) admits an $O(\tau^m)$ accuracy at collocations points, where only $O(\tau^{2(1-\alpha)})$ accuracy could be attained for $n=0$. 
  
  Now we demonstrate the usage of MSD in improving the numerical accuracy by a numerical example.
  Let $T=1$, $K(t,s)=1/\Gamma(1-\alpha)$, $f=1$, and we set collocation parameters as $c_1=2/3$, $c_2=1$, $q=2$. We measure errors
  and convergence orders by the two mesh principle at the mesh points as (\ref{zz2}).
 Numerical  results are presented in Table \ref{tab:mm41}, which agrees well with the predictions for the numerical accuracy and thus indicate the capability of MSD  in improving the numerical accuracy.

  
  \begin{table}
  	\centering \footnotesize
  	\caption{Numerical methods for Volterra integral equation.}
  	\renewcommand{\arraystretch}{1.1} 
  	\begin{tabular}{ccccccccc}
  		\hline
  		&	\multicolumn{3}{c}{Scheme (\ref{vie01})--(\ref{CollMethod1}) with $n=0$ }&&\multicolumn{3}{c}{Scheme (\ref{vie01})--(\ref{CollMethod1}) with $n=\lceil \frac{1}{1-\alpha} \rceil-1$ }\\
  		\cline{2-4}	\cline{6-8}
  		&	$ M $ & ${\rm Error}_M$   & ${\rm Rate}_{\tau}$ 	&&	$ M $ & ${\rm Error}_M$   & ${\rm Rate}_{\tau}$  \\
  		\hline	
  		&512  & $5.68\times 10^{-6}$ & *    && 512  & $9.73\times 10^{-6}$ & *     \\
  		&1024 & $2.12\times 10^{-6}$ & 1.42 && 1024 & $2.44\times 10^{-6}$ & 1.99  \\
  		$\alpha=0.25$&2048 & $7.42\times 10^{-7}$ & 1.52 && 2048 & $6.12\times 10^{-7}$ & 2.00  \\
  		&4096 & $2.48\times 10^{-7}$ & 1.58 && 4096 & $1.53\times 10^{-7}$ & 2.00  \\
  		&8192 & $8.13\times 10^{-8}$ & 1.61 && 8192 & $3.84\times 10^{-8}$ & 2.00 \\
  		&Theory& &1.50& & & &2.00\\
  		\hline
  		&512  & $1.08\times 10^{-3}$ & *    && 512  & $6.61\times 10^{-5}$ & *     \\
  		&1024 & $6.79\times 10^{-4}$ & 0.68 && 1024 & $1.70\times 10^{-5}$ & 1.96  \\
  		$\alpha=0.75$&2048 & $4.34\times 10^{-4}$ & 0.65 && 2048 & $4.35\times 10^{-6}$ & 1.97  \\
  		&4096 & $2.82\times 10^{-4}$ & 0.62 && 4096 & $1.11\times 10^{-6}$ & 1.97  \\
  		&8192 & $1.86\times 10^{-4}$ & 0.60 && 8192 & $2.82\times 10^{-7}$ & 1.98 \\
  		&Theory&&0.50&&&&2.00\\
  		\hline
  	\end{tabular}
  	\label{tab:mm41}%
  \end{table}

\begin{remark}
In \cite{Lia,ZheWan}, a variable-exponent Volterra integral equation with smooth exponent $0\leq \alpha(t)<1$  is considered
\begin{align}\label{vie3}
	u(t)=\int_0^t(t-s)^{-\alpha(t)}K(s,t)u(s)ds+f(t),~~t\in [0,T],
	\end{align}
and it is shown in \cite{Lia,ZheWan} that the condition $\alpha^{(i)}(0)=0$ for $0\leq i\leq m-1$ could eliminate the solution singularity of (\ref{vie3}) and thus ensure $u\in C^m[0,T]$. Under this condition, the scheme (\ref{vie01})--(\ref{CollMethod1}) with $n=0$ for (\ref{vie3}) could reach $O(\tau^m)$ accuracy on the uniform mesh, see \cite[Corollary 4.7]{Lia}. By the MSD as for the constant-exponent model (\ref{vie0}), one could also eliminate the solution singularity by separating singular terms such that the $O(\tau^m)$ accuracy on the uniform mesh could be proved following the same analysis method in \cite{Lia}.
\end{remark}

	\subsection{Subdiffusion}
	Consider the following subdiffusion equation, which describes the anomalous diffusion phenomena in various fields \cite{Deng,Hon,Kor,Zhu}
	\begin{equation}\label{sub1}
		\begin{array}{ll}
			&~~~~\p^\alpha_t u(x,t)-\Delta u(x,t)=f(x,t),~~(x,t)\in \Omega\times (0,T];\\[0.05in]
			&u(x,t)=0,~~(x,t)\in \p\Omega\times [0,T];~~u(x,0)=u_0(x),~~x\in\Omega.
		\end{array}
	\end{equation}
	In general, the subdiffusion share similar structures as the fractional relaxation equation (\ref{mh0}) (due to the positivity of $-\Delta$) such that one could follow the spirit of the derivations in Section \ref{Sec3} to perform mathematical and numerical analysis. Specifically, by variable substitution $u=w+u_0$, we reformulate model (\ref{sub1}) in terms of $w$
 	\begin{equation*}
		\begin{array}{ll}
			&\p^\alpha_t w(x,t)-\Delta w(x,t)= f(x,t)+\Delta u_0(x),~~(x,t)\in \Omega\times (0,T];\\[0.05in]
			&~~~w(x,t)=0,~~(x,t)\in \p\Omega\times [0,T];~~w(x,0)=0,~~x\in\Omega.
		\end{array}
	\end{equation*}
We assume $f+\Delta u_0\in \dot H^{2n}$ for each $t\in [0,T]$ throughout this section such that, based on properties below (\ref{zz1}), $\Delta^j (f+\Delta u_0)=0$ on $\partial\Omega\times [0,T]$ for $0\leq j\leq n-1$ and $\Delta^n (f+\Delta u_0)\in L^2$ for each $t\in [0,T]$.
	In analogous to (\ref{mh1}), we perform the  MSD
	\begin{align}\label{sub2}
		w=v+I^\alpha_t \sum_{i=0}^{n-1} ( I^\alpha_t \Delta)^i (f+\Delta u_0),
	\end{align}
	such that the unknown part $v$ satisfies
	\begin{equation}
		\begin{array}{cc}
			\p^\alpha_t v(x,t)-\Delta v(x,t)= I^{n\alpha}_t\Delta^n (f(x,t)+\Delta u_0(x)),~~(x,t)\in \Omega\times (0,T];\label{sub3}\\
			v(x,t)=0,~(x,t)\in \p\Omega\times [0,T]; ~v(x,0)=0,~x\in\Omega.
		\end{array}
	\end{equation}
	
In general, we one could apply standard methods  such as the Laplace transform method \cite[Section 6.2]{Jinbook} to show that the temporal regularity of $v$ follows that of the solutions to the fractional relaxation equation (\ref{mh0}) as shown in Theorem \ref{thm2.1}. Thus we omit the details due to similarity and focus on the numerical approximation to show the effectiveness of the MSD.
We invoke the nonuniform L1 formula for the fractional derivative in Section \ref{sec32} to discretize \eqref{sub3} at $t=t_m$ as
	\begin{align}
		& D_M^\alpha v^m(x)  - \Delta v^m(x) = I^{n\alpha}_t\Delta^n (f^m(x) +\Delta u_0(x)) + (R_1)^m(x), ~~ 1\leq m \leq M,   \label{mm2} \\
		&\qquad\qquad\qquad~ v^m(x)=0,~x\in \p\Omega; ~v^0(x)=0,~~x\in\Omega,\nonumber
	\end{align}
	where $v^m(x)=v(x,t_m)$, $f^m(x)=f(x,t_m)$, and $(R_1)^m(x)$ is given by \eqref{qwl05}.
	To get a fully discrete scheme,  we denote a quasi-uniform partition of $\Omega$ with the mesh diameter $h$, and let $S_h$ be the space of continuous piecewise linear functions on $\Omega$ with respect to the partition.
 We then integrate \eqref{mm2} multiplied by $\chi \in H_0^1$ on $\Omega$ and drop the truncation errors to get the fully discrete Galerkin scheme: find $V^m_h \in S_h$ with $V^0_h  = 0$ such that the following scheme holds for any $\chi \in S_h$ and $1\leq m \leq M$
	\begin{align}
		&\left(  D_M^\alpha  V_h^m, \chi \right) +  \left( \nabla V_h^{m} , \nabla\chi \right)  =  \left( I^{n\alpha}_t\Delta^n (f^m+\Delta u_0), \chi \right)\label{mm15}.
	\end{align}
Based on \eqref{sub2}, the numerical approximation for $u^m$, which is not a function in $S_h$, could be given as
		\begin{align}\label{mm8}
			U_h^m = V_h^m + u_0 + I^\alpha_t \sum_{i=0}^{n-1} ( I^\alpha_t\Delta)^i (f^m+\Delta u_0).
	\end{align}
In particular, for $n=0$ and $u_0=0$, the scheme (\ref{mm15})--(\ref{mm8}) degenerates to the direct L1 scheme for the original subdiffusion model (\ref{sub1}).

\begin{remark}
 Analogously to the numerical analysis in Section \ref{sec32},  one could derive the error bound for the  scheme \eqref{mm15}--\eqref{mm8} with $O(\tau^{\min\{2-\alpha,r(n+1)\alpha \}} + h^2)$ accuracy, as we will show in the following numerical example.
\end{remark}
	

Now we compare the direct L1 scheme for original model (\ref{sub1}) and the MSD-based scheme \eqref{mm15}--\eqref{mm8} for (\ref{sub1}) by a numerical example. Let $T=1$, $\Omega=(0,2\pi)$, $f=\sin(x)$ and $u_0(x)=\sin(\frac{1}{2}x)$. The uniform partition is adopted in space with $h=2\pi/J$ for some $0<J\in\mathbb N$. We define the discrete $L^\infty(0,T;L^2)$ error as 
\begin{equation}\label{Rate}
	{\rm Error}_M(\tau, h) = \max\limits_{0 \leq m \leq M} \sqrt{h\sum\limits_{j=1}^{J-1} (U_j^{2m}(2M) - U_j^{m}(M))^2}, 
			\end{equation}
and the convergence rate Rate$_{\tau}$ is measured by the two mesh principle as (\ref{zz2}). We fix $J=128$ to focus the attention on the temporal accuracy.

Numerical experiments are performed in the same manner as those in Section \ref{sec34}, and the results presented in Tables \ref{tab:mm1}--\ref{tab:mm2} again indicate the advantages \textbf{A}--\textbf{B} of the MSD stated below (\ref{jy6}).
	
	\begin{table}
		\centering \footnotesize
		\caption{Numerical methods for subdiffusion with $r=1$.}
		\renewcommand{\arraystretch}{1.1} 
		\begin{tabular}{ccccccccc}
			\hline
			&	\multicolumn{3}{c}{Direct L1 scheme for original model}&&\multicolumn{3}{c}{Scheme \eqref{mm15}--\eqref{mm8} with $n=\lceil \frac{2-\alpha}{\alpha} \rceil-1$ }\\
			\cline{2-4}	\cline{6-8}
			&	$ M $ & ${\rm Error}_M(\tau, h)$   & ${\rm Rate}_{\tau}$ 	&&	$ M $ & ${\rm Error}_M(\tau, h)$   & ${\rm Rate}_{\tau}$  \\
			\hline	
			&	64    & $ 4.31 \times 10^{-2} $  & *     &&	64    & $3.38\times 10^{-5} $  &* \\
			&	128    & $ 3.84 \times 10^{-2} $ & 0.17      &&	128    & $1.12 \times 10^{-5} $ & 1.59\\
			$\alpha=0.25$			&	256    & $ 3.39 \times 10^{-2} $ & 0.18      &&	256    & $3.69 \times 10^{-6} $ & 1.60\\
			&	512    & $ 2.98 \times 10^{-2} $ & 0.19      &&	512    & $1.20 \times 10^{-6} $ & 1.62\\
			&	1024     & $ 2.60 \times 10^{-2} $ &  0.20  	    &&	1024     & $3.90 \times 10^{-7} $ &  1.63  \\
			&Theory&&0.25&&&&1.75\\
			\hline
			&	512     & $ 1.40 \times 10^{-3} $ & *      &&	512    & $6.35 \times 10^{-5} $ & *\\
			&	1024    & $ 8.29 \times 10^{-4} $ & 0.75   &&	1024   & $2.72 \times 10^{-5} $ &  1.22\\
			$\alpha=0.75$ &	2048     & $4.93 \times 10^{-4} $ &  0.75  &&	2048   & $1.16 \times 10^{-5} $ &  1.23  \\
			&	4096     & $2.93 \times 10^{-4} $ &  0.75  &&	4096   & $4.91 \times 10^{-6} $ &  1.24\\
			&	8192     & $ 1.74 \times 10^{-4} $ &  0.75 &&	8192   & $2.08 \times 10^{-6} $ &  1.24\\
			&Theory&&0.75&&&&1.25\\
			\hline
		\end{tabular}
		\label{tab:mm1}%
	\end{table}

	\begin{table}
		\centering \footnotesize
		\caption{Numerical methods for subdiffusion with $r=\frac{2-\alpha}{(n+1)\alpha}$.}
		\renewcommand{\arraystretch}{1.1} 
		\begin{tabular}{ccccccccc}
			\hline
			&	\multicolumn{3}{c}{Direct L1 scheme for original model}&&\multicolumn{3}{c}{Scheme \eqref{mm15}--\eqref{mm8} with $n=3$}\\
			\cline{2-4}	\cline{6-8}
			&	$ M $ & ${\rm Error}_M(\tau, h)$   & ${\rm Rate}_{\tau}$ 	&&	$ M $ & ${\rm Error}_M(\tau, h)$   & ${\rm Rate}_{\tau}$   \\
			\hline
			&	64     & $ 5.23 \times 10^{-4} $ &  *  	    &&	64      & $1.06 \times 10^{-5} $ &  *  \\
			&	128     & $ 1.71 \times 10^{-4} $ &  1.61  	    &&	128      & $3.34 \times 10^{-6} $ &  1.67  \\
			&	256     & $ 6.21 \times 10^{-5} $ & 1.46       &&	256      & $1.04 \times 10^{-6} $ & 1.68   \\
			$\alpha=0.25$	&	512    & $ 4.75 \times 10^{-5} $  & 0.39      &&	512    & $3.21 \times 10^{-7} $  & 1.69 \\
			&	1024    & $ 2.92 \times 10^{-5} $ & 0.70     &&	1024     & $9.85 \times 10^{-8} $ & 1.70\\
			&Theory&&1.75&&&&1.75\\
			\hline
			&	512     & $1.88 \times 10^{-4} $ &  *       &&	512     & $7.11\times 10^{-5} $ &  *   \\
			&	1024      & $8.12 \times 10^{-5} $ & 1.21     &&	1024     & $3.19\times 10^{-5} $ & 1.16 \\
			$\alpha=0.75$	&		2048    & $3.49 \times 10^{-5} $  & 1.22      &&	2048    & $1.43\times 10^{-5} $ & 1.16\\
			&   4096   & $1.49 \times 10^{-5} $ & 1.22    &&	4096    & $ 6.33 \times 10^{-6} $ & 1.17\\
			&	8192   & $6.37 \times 10^{-6} $ & 1.23     &&	8192    & $ 2.79 \times 10^{-6} $ & 1.18\\
		
			&Theory&&1.25&&&&1.25\\
			\hline
		\end{tabular}
		\label{tab:mm2}%
	\end{table}

	\subsection{Integrodifferential equation}\label{sec42}
	In this section, we consider the following integrodifferential equation, which has been widely used in modelling, e.g. heat transfer in materials with memory \cite{Pruss,Renardy1,Yi} 
	\begin{equation}\label{pm1}
		\begin{array}{ll}
			&~~~\p_t u(x,t)-I_t^\alpha\Delta u(x,t)=f(x,t),~~(x,t)\in \Omega\times (0,T];\\[0.05in]
			&u(x,t)=0,~~(x,t)\in \p\Omega\times [0,T];~~u(x,0)=u_0(x),~~x\in\Omega.
		\end{array}
	\end{equation}
	Compared with the fractional relaxation equation, Volterra integral equation or subdiffusion, which are models of order between $0$ and $1$ in time, the integrodifferential equation is a model of order between $1$ and $2$ such that the properties would be significantly different. Thus we give a detailed mathematical and numerical analysis.
	We will see that the MSD for model (\ref{pm1}) with $n=1$, that is, we only separate one singular term from $u$, will significantly improve the regularity and then facilitate the numerical approximation, which indicates that the MSD is particularly suitable for integrodifferential problems.

\subsubsection{Mathematical analysis}

We apply the variable substitution $w= u-u_0$ to get
	\begin{equation}\label{pm3}
		\begin{array}{cc}
			\p_t w(x,t)-I_t^\alpha\Delta w(x,t)=f(x,t)+\Delta u_0(x)\beta_{\alpha+1}(t),~~(x,t)\in \Omega\times (0,T];\\[0.05in]
			w(x,t)=0,~~(x,t)\in \p\Omega\times [0,T];~~w(x,0)=0,~~x\in\Omega.
		\end{array}
	\end{equation}
Now we intend to apply the MSD for model (\ref{pm3}). In general, one could follow the spirit of (\ref{mh1}) or (\ref{sub2}) to separate $n$ singular terms from $w$. However, a larger $n$ may inquire a higher spatial smoothness on the data. For integrodifferential equation, however, it usually suffices to select $n=1$ due to the substantial improvement of the solution regularity. Specifically, let
	\begin{align}\label{pm31}
		w=v+I_t^1(f+\Delta u_0\beta_{\alpha+1}),
	\end{align}
and we assume $f,\Delta u_0\in \dot H^{1/2+\varepsilon}$ for $t\in [0,T]$ for some $0<\varepsilon\ll 1$ throughout this section such that $f=\Delta u_0=0$ for $(x,t)\in \p\Omega\times [0,T]$.
	We then invoke (\ref{pm31}) in (\ref{pm3}) to get
		\begin{align}
			&\p_t v(x,t)-I_t^\alpha\Delta v(x,t)=F(x,t):=\Delta I_t^{1+\alpha}(f+\Delta u_0\beta_{\alpha+1})(x,t),~~(x,t)\in \Omega\times (0,T];\nonumber\\[0.05in]
		&\qquad\qquad\qquad	v(x,t)=0,~~(x,t)\in \p\Omega\times [0,T];~~v(x,0)=0,~~x\in\Omega.\label{pm2}
		\end{align}

The following theorem provides estimates for time derivatives of $v$, which, compared with the classical result $\p_t^m u\sim t^{\alpha-(m-1)}$ for $m=2,3$ (see e.g. \cite[Page 64]{Mc}), becomes much smoother in time.
	\begin{theorem}\label{thm4} The following estimates hold:
	\begin{itemize}
	\item	Suppose  $\beta_{(1+\alpha)\varepsilon}*\|\Delta f\|_{ H^{2\varepsilon}}^2<\infty$ for $t\in [0,T]$ for some $0<\varepsilon\ll 1$, then
		$
			\|\p_tv(\cdot,t)\|_{\dot H^2}\leq Q$ for $t\in [0,T]$;	
			\item Suppose $\|\Delta f(\cdot,0)\|_{\dot H^{\frac{2\alpha}{1+\alpha}}}<\infty$, $\beta_{(1+\alpha)\varepsilon}*\|\p_t\Delta f\|_{ H^{2\varepsilon}}^2<\infty$ for $t\in [0,T]$ and some $0<\varepsilon\ll 1$ and $\|\Delta^2 u_0\|_{\dot H^{\frac{2(1-\alpha)}{1+\alpha}}}<\infty$,  then $
			\|\p_t^2v(\cdot,t)\|_{\dot H^2}\leq Q$ for $t\in (0,T]$;
\item Suppose  $\|\Delta f(\cdot,0)\|_{\dot H^{2\varepsilon}}<\infty$, $\|\Delta^2 u_0\|_{\dot H^{2\varepsilon}}<\infty$, and
		$I_t^{\alpha}\|\p_t\Delta f(\cdot,t)\|<\infty$ and $\beta_{(\alpha+1)\varepsilon}*\big\| I_t^{\alpha}\p_t\Delta f(\cdot,t)\big\|_{\dot H^{2\varepsilon}}<\infty$ for $t\in [0,T]$ and some $0<\varepsilon\ll 1$, then
	$\|\p_t^3v(\cdot,t)\|\leq Qt^{\alpha-1}$ for $t\in (0,T]$.
			\end{itemize}
	\end{theorem}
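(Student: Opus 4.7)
My plan is to combine the eigenfunction expansion of $-\Delta$ with the Mittag-Leffler calculus to obtain mode-wise representations of $v$ and its time derivatives, and then to estimate those mode-wise quantities uniformly enough to sum over modes in the $\dot H^r$ norms. Writing $v(\cdot,t) = \sum_i v_i(t)\phi_i$ with $v_i = (v,\phi_i)$ and $g_i = (g,\phi_i)$ for $g = f + \Delta u_0 \beta_{\alpha+1}$, taking Laplace transforms in each scalar mode of $\p_t v - I_t^{\alpha}\Delta v = \Delta I_t^{1+\alpha} g$ yields
\begin{align*}
v_i(t) = -\lambda_i \bigl[t^{1+\alpha} E_{1+\alpha,\,2+\alpha}(-\lambda_i t^{1+\alpha})\bigr] * g_i(t),
\end{align*}
so the $(1+\alpha)$-order smoothing already embedded in $F$ is transferred directly to $v$.

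Repeated differentiation using the identity $\p_t[t^{\beta-1}E_{\alpha,\beta}(-\lambda t^{\alpha})] = t^{\beta-2}E_{\alpha,\beta-1}(-\lambda t^{\alpha})$ then produces $\p_t v_i$ and $\p_t^2 v_i$ as convolutions against kernels that vanish at $t=0$, so the differentiation passes cleanly inside the convolution. For $\p_t^3 v_i$ the kernel $t^{\alpha-1}E_{1+\alpha,\alpha}(-\lambda_i t^{1+\alpha})$ that appears from one more derivative is no longer integrable, so I would instead apply the alternative product rule $\p_t(h*g_i) = h(t)g_i(0) + (h*g_i')(t)$. The boundary term produces the expected $t^{\alpha-1}$ singularity, while the remaining convolution involves $g_i' = f_i' + (\Delta u_0)_i \beta_\alpha$, which itself contains an Abel-type singularity.

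The mode-wise bounds then rest on two ingredients: (i) the uniform Mittag-Leffler decay $|E_{1+\alpha,\beta}(-z)|\leq C/(1+z)$ for $z\geq 0$, combined with the elementary interpolation $\lambda/(1+\lambda t^{1+\alpha})\leq \lambda^{1-\theta} t^{-(1+\alpha)\theta}$ for $\theta\in[0,1]$, which trades powers of $\lambda_i$ for powers of $t$; and (ii) Young/Minkowski convolution bounds that absorb the weighted $L^2$-in-time hypotheses $\beta_{(1+\alpha)\varepsilon}*\|\cdot\|_{H^{2\varepsilon}}^2 < \infty$. The fractional spatial indices $\tfrac{2\alpha}{1+\alpha}$ and $\tfrac{2(1-\alpha)}{1+\alpha}$ arise precisely by tuning $\theta$ to match the target powers of $t$ while keeping the resulting spectral sums $\sum_i \lambda_i^{2-2\theta}|g_i(0)|^2$ and $\sum_i \lambda_i^{2-2\theta}|(\Delta u_0)_i|^2$ finite; summing against the $\dot H^r$ Parseval identity then yields each of the three bounds.

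The main obstacle will be the third estimate, where the $g_i'$-term couples the $(\Delta u_0)_i \beta_\alpha$ singularity with the already singular kernel $t^{\alpha-1}E_{1+\alpha,\alpha}(-\lambda_i t^{1+\alpha})$. Bounding this doubly singular convolution uniformly across the spectrum, while matching the sharp target rate $t^{\alpha-1}$ and avoiding an endpoint divergence in the $\theta$-interpolation, is what forces the precise fractional-Sobolev hypotheses on $\Delta f(\cdot,0)$, $\p_t\Delta f$ and $\Delta^2 u_0$; the small $\varepsilon$-gains appearing in $\dot H^{2\varepsilon}$ and in the weight $\beta_{(1+\alpha)\varepsilon}$ are exactly what is needed to push past that endpoint when Young's inequality is applied at the optimized exponents.
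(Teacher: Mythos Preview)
Your plan is essentially the paper's proof: both represent $v$ through Mittag--Leffler convolution kernels in the eigenfunction expansion, differentiate the convolution (switching to the rule $\p_t(h*g)=h(t)g(0)+h*g'$ once the kernel no longer vanishes at $t=0$), and then close each $\dot H^r$ estimate by combining the decay $|E_{1+\alpha,\beta}(-z)|\leq C/(1+z)$ with the interpolation $\lambda/(1+\lambda t^{1+\alpha})\leq\lambda^{1-\theta}t^{-(1+\alpha)\theta}$ and a Cauchy--Schwarz/Young bound on the time convolution. The only cosmetic differences are that the paper switches to differentiating the forcing factor already at the $\p_t^2 v$ stage (so $g_i(0)$ appears there via integration by parts rather than only at $\p_t^3 v$) and packages the $\p_t^3 v$ bound through the ready-made operator estimate $\|E'(t)q\|\leq Qt^{(1+\alpha)\varepsilon-1}\|q\|_{\dot H^{2\varepsilon}}$ from McLean--Thom\'ee rather than deriving it mode by mode.
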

	\begin{proof} We apply the solution operator $E(t)$ and its time derivatives, whose properties are characterized by Mittag-Leffler functions.
		The solution of (\ref{pm2}) could be expressed via the solution operator $E(t)$ \cite{Mc}, i.e., $v=\int_0^t  E(t-s) I_s^{1+\alpha}\Delta F(x,s)ds$ with $E(t)q:=E_{1+\alpha,1}(-\lambda_jt^{1+\alpha})(q,\phi_j)\phi_j$. Differentiate this equation in time and apply the boundedness of $\Delta f$ to get
		\begin{align*}
			\p_tv&=\int_0^t  E(t-s) I_s^{\alpha}\Delta F(x,s)ds,~~
			\p_t^2v=\int_0^t  E(t-s) \p_s(I_s^{\alpha}\Delta F(x,s))ds\\
			&=\int_0^t  E(t-s)\big[\beta_{\alpha}\Delta f(x,0)+ I_s^{\alpha}\p_s\Delta (f(x,s)+\beta_{\alpha+1}(s)\Delta u_0)\big]ds,\\
			\p_t^3v&=\beta_{\alpha}\Delta f(x,0)+ I_t^{\alpha}\p_t\Delta (f(x,t)+\beta_{\alpha+1}(t)\Delta u_0)\\
			& \quad +\int_0^t  E'(t-s)\big[\beta_{\alpha}\Delta f(x,0)+ I_s^{\alpha}\p_s\Delta (f(x,s)+\beta_{\alpha+1}(s)\Delta u_0)\big]ds.
		\end{align*}
		To bound $\|\p_t v\|_{\dot H^2}$ under weak regularity of the data, we evaluate the right-hand side of $\p_t v$ as
		\begin{align}
			\p_t v&=\int_0^t \sum_{j=1}^\infty E_{1+\alpha,1}(-\lambda_jt^{1+\alpha})(I_s^{\alpha}\Delta F(x,s),\phi_j)\phi_jds \nonumber\\
			&=\sum_{j=1}^\infty E_{1+\alpha,1}(-\lambda_jt^{1+\alpha})*\beta_{\alpha}*(\Delta F(x,s),\phi_j)\phi_j \nonumber\\
			&=\sum_{j=1}^\infty t^\alpha E_{1+\alpha,1+\alpha}(-\lambda_jt^{1+\alpha})*(\Delta f+\Delta^2 u_0\beta_{\alpha+1},\phi_j)\phi_j \nonumber\\
			&=\sum_{j=1}^\infty t^\alpha E_{1+\alpha,1+\alpha}(-\lambda_jt^{1+\alpha})*(\Delta f,\phi_j)\phi_j \nonumber\\
			&\qquad+\sum_{j=1}^\infty t^{2\alpha+1} E_{1+\alpha,2(1+\alpha)}(-\lambda_jt^{1+\alpha})(\Delta^2 u_0,\phi_j)\phi_j=:A_1+A_2. \nonumber
		\end{align}
	By $\alpha-(1+\alpha)(1-\varepsilon)>-1$ for any $0<\varepsilon\ll 1$, we bound $A_1$ as
		\begin{align*}
			\|A_1\|_{\dot H^2}^2&=\sum_{j=1}^\infty\lambda_j^2\big[ t^\alpha E_{1+\alpha,1+\alpha}(-\lambda_jt^{1+\alpha})*(\Delta f(x,s),\phi_j)\big]^2\\
			&=\sum_{j=1}^\infty\big[ \lambda_j^\varepsilon (\lambda_jt^{1+\alpha})^{1-\varepsilon} t^{\alpha-(1+\alpha)(1-\varepsilon)} E_{1+\alpha,1+\alpha}(-\lambda_jt^{1+\alpha})*(\Delta f(x,s),\phi_j)\big]^2\nonumber\\
			&\leq Q\sum_{j=1}^\infty\big[ \lambda_j^\varepsilon  t^{\alpha-(1+\alpha)(1-\varepsilon)}*|(\Delta f(x,s),\phi_j)|\big]^2\\
			&\leq Q\sum_{j=1}^\infty\lambda_j^{2\varepsilon}t^{\alpha-(1+\alpha)(1-\varepsilon)}*(\Delta f(x,s),\phi_j)^2=Qt^{\alpha-(1+\alpha)(1-\varepsilon)}*\|\Delta f\|_{\dot H^{2\varepsilon}}^2\\
			&\leq Q\beta_{(1+\alpha)\varepsilon}*\|\Delta f\|_{ H^{2\varepsilon}}^2.
		\end{align*}
		Then we apply
		\begin{align}
			&|\lambda_jt^{2\alpha+1} E_{1+\alpha,2(1+\alpha)}(-\lambda_jt^{1+\alpha})| \nonumber\\
			&\qquad=|\lambda_j^{\frac{-\alpha}{1+\alpha}}(\lambda_jt^{1+\alpha})^{\frac{1+2\alpha}{1+\alpha}}E_{1+\alpha,2(1+\alpha)}(-\lambda_jt^{1+\alpha})|\leq Q\lambda_j^{\frac{-\alpha}{1+\alpha}} \nonumber
		\end{align}
		to bound $A_2$ as
		\begin{align}
			\|A_2\|_{\dot H^2}^2\leq Q\sum_{j=1}^\infty\lambda_j^{\frac{-2\alpha}{1+\alpha}}(\Delta^2 u_0,\phi_j)^2=Q\|\Delta u_0\|_{\dot H^{\frac{-2\alpha}{1+\alpha}}}^2. \nonumber
		\end{align}
		
		Similar to the estimates above, we first simplify the expression of $\p_t^2v$ as
		\begin{align}
			\p_t^2 v&=\sum_{j=1}^\infty t^{\alpha}E_{1+\alpha,1+\alpha}(-\lambda_jt^{1+\alpha})(\Delta f(x,0),\phi_j)\phi_j \nonumber\\
			&\quad+\sum_{j=1}^\infty t^{\alpha}E_{1+\alpha,1+\alpha}(-\lambda_jt^{1+\alpha})*(\p_t\Delta f(x,t),\phi_j)\phi_j \nonumber\\
			&\quad+\sum_{j=1}^\infty t^{2\alpha}E_{1+\alpha,1+2\alpha}(-\lambda_jt^{1+\alpha})(\Delta^2 u_0,\phi_j)\phi_j, \nonumber
		\end{align}
		and then bound $\|\p_t^2v\|_{\dot H^2}$ as
		\begin{align}
			\|\p_t^2v\|_{\dot H^2}^2&\leq Q\sum_{j=1}^\infty \lambda_j^{\frac{2\alpha}{1+\alpha}}(\Delta f(x,0),\phi_j)^2+Q\beta_{(1+\alpha)\varepsilon}*\|\p_t\Delta f\|_{ H^{2\varepsilon}}^2 \nonumber\\
			&\quad+\sum_{j=1}^\infty\lambda_j^{\frac{2(1-\alpha)}{1+\alpha}} (\Delta^2 u_0,\phi_j)^2 \nonumber\\
			&=Q\|\Delta f(\cdot,0)\|_{\dot H^{\frac{2\alpha}{1+\alpha}}}^2+Q\beta_{(1+\alpha)\varepsilon}*\|\p_t\Delta f\|_{ H^{2\varepsilon}}^2+Q\|\Delta^2 u_0\|_{\dot H^{\frac{2(1-\alpha)}{1+\alpha}}}^2. \nonumber
		\end{align}
		Finally, we apply $\|E'q\|\leq Qt^{(1+\alpha)\varepsilon-1}\|q\|_{\dot H^{2\varepsilon}}$ \cite[Theorem 5.5]{Mc} to get
		\begin{align*}
			\|\p_t^3v\|&\leq \beta_{\alpha}\|\Delta f(\cdot,0)\|+Q I_t^{\alpha}\|\p_t\Delta (f(\cdot,t)+\beta_{\alpha+1}(t)\Delta u_0)\|\\
			&~~+Q\beta_{(\alpha+1)\varepsilon}*\big\|\beta_{\alpha}\Delta f(\cdot,0)+ I_s^{\alpha}\p_t\Delta (f(\cdot,t)+\beta_{\alpha+1}(t)\Delta u_0)\big\|_{\dot H^{2\varepsilon}}\\
			&\leq  Qt^{\alpha-1}(\|\Delta f(\cdot,0)\|_{\dot H^{2\varepsilon}}+\|\Delta^2 u_0\|_{\dot H^{2\varepsilon}})\\
			&~~+ QI_t^{\alpha}\|\p_t\Delta f(\cdot,t)\|+Q\beta_{(\alpha+1)\varepsilon}*\big\| I_t^{\alpha}\p_t\Delta f(\cdot,t)\big\|_{\dot H^{2\varepsilon}},
		\end{align*}
		which completes the proof.
	\end{proof}
	
	\subsubsection{Semi-discrete scheme}
	We introduce the following trapezoid convolution quadrature rule \cite{Cuesta,Qiu}
	\begin{align}\label{tcq}
		\mathcal{Q}_m^{(\alpha)} (\varphi) = \tau^{\alpha} \sum_{p=0}^{m} \omega_{p}^{(\alpha)} \varphi^{m-p} + \chi_{m}^{(\alpha)} \varphi^0,
	\end{align}
	where the weights $\omega_{p}^{(\alpha)}$ are determined from the generating function
	\begin{align*}
		(\delta(z))^{-\alpha}  = \sum_{m=0}^{\infty}  \omega_{m}^{(\alpha)} z^{m}, \quad  \delta(z) = \frac{2(1-z)}{1+z}.
	\end{align*}
	In addition, the starting weights are determined by taking $\varphi=1$
	\begin{align*}
		\chi_{m}^{(\alpha)} = \frac{(t_m)^{\alpha}}{\Gamma(1+\alpha)} - \sum_{p=0}^{m} \omega_{p}^{(\alpha)}, \quad 1\leq m \leq M.
	\end{align*}
Then we denote the quadrature error $(R_2)^m  := I_t^\alpha \Delta v(x,t_m) - \mathcal{Q}_m^{(\alpha)} (\Delta v(x,\cdot))$. To discretize $\p_t v(x,t_m)$, we denote $v^m=v(x,t_m)$ and
	\begin{align*}
		\delta_t v^m = \frac{v^m-v^{m-1}}{\tau}, \quad v^{m-\frac{1}{2}} = \frac{v^m+v^{m-1}}{2},  \quad m\geq 1.
	\end{align*}
	We then apply the Crank-Nicolson method to approximate $\p_t v(x,t_m)$ as
	\begin{align}\label{err02}
		\frac{1}{2}[ \p_t v(x,t_m) + \p_t v(x,t_{m-1}) ] = \delta_t v^m + (R_3)^{m-1/2}.
	\end{align}
We invoke \eqref{tcq} and \eqref{err02} to discretize \eqref{pm2} as
	\begin{align}
		\delta_t v^m &  - \frac{1}{2}\left[ \mathcal{Q}_m^{(\alpha)} (\Delta v(x,\cdot)) + \mathcal{Q}_{m-1}^{(\alpha)} (\Delta v(x,\cdot))  \right] \nonumber \\
		& = \frac{1}{2}[F(x,t_m)+F(x,t_{m-1})] -  [(R_2)^{m-1/2} +  (R_3)^{m-1/2}], \label{mm02}
	\end{align}
	with $v^0=v(x,0)=0$. We drop the truncation errors $(R_2)^{m-1/2} +  (R_3)^{m-1/2}$ to get the semi-discrete scheme
	\begin{align}
		& \delta_t V^m   - \tau^{\alpha} \sum_{p=0}^{m} \omega_{p}^{(\alpha)} \Delta V^{m-p-\frac{1}{2}}  = \frac{1}{2}[F(x,t_m)+F(x,t_{m-1})], \quad V^0=V^{-1}=0,  \label{mm03} \\
       &  \qquad\qquad\qquad\qquad  U^m = V^m +I_t^1(f+\Delta u_0\beta_{\alpha+1})(x,t_m) + u_0(x). \nonumber
	\end{align}

	
	\begin{theorem}\label{thmqq}
		Under the assumptions of Theorem \ref{thm4}, the following error estimate holds
		\begin{equation*}
			\begin{array}{l}
				\|u^m -U^m\|\leq    Q \tau^2, \quad 1\leq m \leq M.
			\end{array}
		\end{equation*}
	\end{theorem}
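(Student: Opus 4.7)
My plan rests on the identity $u^m-U^m = v^m - V^m$, which follows because the MSD corrector $I_t^1(f+\Delta u_0\beta_{\alpha+1})(x,t_m)+u_0(x)$ appears in both $u^m$ (via (\ref{pm31}) and $w=u-u_0$) and $U^m$ (by definition below (\ref{mm03})). Thus the analysis reduces to bounding the error $\rho^m:=v^m-V^m$ produced by the fully numerical scheme (\ref{mm03}) for $v$, for which I would subtract (\ref{mm03}) from (\ref{mm02}) to obtain the error equation
\begin{align*}
\delta_t \rho^m - \tfrac{1}{2}\bigl[\mathcal{Q}_m^{(\alpha)}(\Delta \rho) + \mathcal{Q}_{m-1}^{(\alpha)}(\Delta \rho)\bigr] = -\bigl[(R_2)^{m-1/2}+(R_3)^{m-1/2}\bigr], \quad \rho^0=\rho^{-1}=0.
\end{align*}

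The next step is to control the two truncation contributions using the refined temporal regularity of $v$ from Theorem \ref{thm4}. For the Crank--Nicolson remainder $(R_3)^{m-1/2}$, a Taylor expansion on $[t_{m-1},t_m]$ gives $\|(R_3)^{m-1/2}\|\leq Q\tau \int_{t_{m-1}}^{t_m}\|\partial_t^3 v(\cdot,s)\|\,ds$, and the bound $\|\partial_t^3 v\|\leq Qt^{\alpha-1}$ from Theorem \ref{thm4} yields $\sum_m \tau \|(R_3)^{m-1/2}\|\leq Q\tau^2 \int_0^T s^{\alpha-1}ds \leq Q\tau^2$ (integrability holds since $\alpha>0$). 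For the convolution quadrature remainder $(R_2)^{m-1/2}$, I would invoke the standard trapezoidal CQ error analysis of Cuesta-Lubich-Palencia type \cite{Cuesta} combined with the symmetric averaging: the inclusion of the starting weights $\chi_m^{(\alpha)}$ absorbs the incompatibility at $t=0$, so that the pointwise/summed error is controlled by $\tau^2$ times integrals of $\|\partial_t v\|_{\dot H^2}$ and $\|\partial_t^2 v\|_{\dot H^2}$ (which arise because the integrand in the $I_t^\alpha$ operator is $\Delta v$, and $\|\Delta \partial_t^j v\|=\|\partial_t^j v\|_{\dot H^2}$). Theorem \ref{thm4} supplies uniform bounds for $j=1,2$, hence the summed contribution is $O(\tau^2)$.

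The final stage is an energy estimate applied to the error equation. Testing the error equation with $2\tau \rho^{m-1/2}$ and summing over $m$, the telescoping yields $\|\rho^m\|^2-\|\rho^0\|^2$ on the left from the $\delta_t$ term, while the $\mathcal{Q}$-part contributes a nonnegative quadratic form because the trapezoidal CQ kernel inherits the positive-type structure of $I_t^\alpha$ (this is the standard Lubich-type positivity used in \cite{Cuesta,Qiu}, ensured by the location of zeros of $\delta(z)^{-\alpha}$). The right-hand side contains $\sum_m \tau |(R_2)^{m-1/2}+(R_3)^{m-1/2}|\,\|\rho^{m-1/2}\|$, and a discrete Gronwall inequality together with the truncation bounds delivers $\max_m\|\rho^m\|\leq Q\tau^2$, which, combined with the identity $u^m-U^m=v^m-V^m$, concludes the proof.

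The main obstacle I anticipate is the rigorous treatment of the coercivity of the averaged convolution-quadrature operator together with the starting-weight correction: one has to check that the bilinear form associated with $\tfrac{1}{2}[\mathcal{Q}_m^{(\alpha)}+\mathcal{Q}_{m-1}^{(\alpha)}]$, acting on sequences $\{\Delta\rho^k\}$, remains of positive type in the discrete energy setting, and that the starting-weight defect does not degrade the $O(\tau^2)$ order despite the mild singularity $\partial_t^3 v \sim t^{\alpha-1}$. Once this positivity is in hand, the rest follows by well-established estimates and the regularity already furnished by Theorem \ref{thm4}.
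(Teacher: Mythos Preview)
Your plan is correct and matches the paper's proof almost step for step: reduce to $\rho^m=v^m-V^m$, derive the error equation, test with $2\tau\rho^{m-1/2}$, use the positivity of the trapezoidal CQ kernel (from \cite{Qiu}) to drop the convolution term, and bound the summed truncation errors $(R_2)$ and $(R_3)$ via the regularity in Theorem~\ref{thm4}. Two cosmetic differences: the paper handles $(R_3)^{1/2}$ separately using only $\|\partial_t^2 v\|\leq Q$ (your route via the integrable $t^{\alpha-1}$ bound on $\partial_t^3 v$ also works), and the paper closes the estimate with a simple max-norm trick rather than a Gr\"onwall argument, which is cleaner here since no feedback term $\lambda\rho^m$ appears on the right-hand side.
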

	\begin{proof} Let $\rho^m=v^m-V^m$. Since $u^m -U^m=	v^m -V^m$, it suffices to estimate $\rho$. We subtract \eqref{mm03} from \eqref{mm02} to get the following error equation
	\begin{align}
		\delta_t \rho^m &  - \tau^{\alpha} \sum_{p=0}^{m} \omega_{p}^{(\alpha)} \Delta \rho^{m-p-\frac{1}{2}}  = (R_2)^{m-1/2} +  (R_3)^{m-1/2}, \quad \rho^0=0.  \label{mm06}
	\end{align}
 We then take the inner product of \eqref{mm06} with $2\tau v^{m-\frac{1}{2}}$ and sum the resulting equation from $m=1$ to $m=m_0$ to get
		\begin{align*}
			\|\rho^{m_0}\|^2 - \|\rho^{0}\|^2 & + 2\tau^{1+\alpha}\sum_{m=1}^{m_0} \sum_{p=0}^{m} \omega_{p}^{(\alpha)} ( \nabla \rho^{m-p-\frac{1}{2}}, \nabla \rho^{m-\frac{1}{2}}) \\
			& \leq 2\tau \sum_{m=1}^{m_0} \|(R_2)^{m-1/2} +  (R_3)^{m-1/2}\| \|v^{m-\frac{1}{2}}\|.
		\end{align*}
	By \cite[Lemma 5]{Qiu} and $\rho^0=0$, we have
		\begin{align*}
			& \sum_{m=1}^{m_0} \sum_{p=0}^{m} \omega_{p}^{(\alpha)} ( \nabla \rho^{m-p-\frac{1}{2}}, \nabla \rho^{m-\frac{1}{2}}) = \sum_{m=0}^{m_0} \sum_{p=0}^{m} \omega_{p}^{(\alpha)} ( \nabla \rho^{m-p-\frac{1}{2}}, \nabla \rho^{m-\frac{1}{2}}) \geq 0.
		\end{align*}
We apply this and choose a suitable $m^*$ satisfying $\ds\|\rho^{m^*}\|=\max_{1\leq m \leq m_0}\|\rho^m\|$ to obtain
		\begin{align*}
			\|\rho^{m_0}\| \leq \|\rho^{m^*}\| \leq 2\tau \sum_{m=1}^{m^*} \|(R_2)^{m-\frac{1}{2}} +  (R_3)^{m-\frac{1}{2}}\| \leq 2\tau \sum_{m=1}^{m_0} (\|(R_2)^{m-\frac{1}{2}}\| + \| (R_3)^{m-\frac{1}{2}}\|).
		\end{align*}
By \cite[Lemma 4]{Qiu}, the quadrature error $(R_2)^m$  could be bounded as
	\begin{align*}
		\|(R_2)^m\| \leq Q & \Big[ \tau^2 t_m^{\alpha-1}\|\Delta \p_t v(\cdot,0)\|+\tau^{\alpha+1} \int_{t_{m-1}}^{t_m}\| \Delta \p_t^2 v(\cdot,s)\|ds \\
		& \qquad\qquad   + \tau^2\int_{0}^{t_{m-1}}(t_m-s)^{\alpha-1} \|\Delta \p_t^2 v(\cdot,s)\|ds  \Big].
	\end{align*}
Then we apply the triangle inequality to bound $(R_3)^{m-\frac{1}{2}}$
	\begin{align*}
		\|(R_3)^{m-\frac{1}{2}}\| \leq \left\| \frac{1}{2}[ \p_t v(\cdot,t_m) + \p_t v(\cdot,t_{m-1}) ] - \p_t v(\cdot,t_{m-\frac{1}{2}}) \right\| +  \left\| \p_t v(\cdot,t_{m-\frac{1}{2}})-\delta_t v^m \right\|.
	\end{align*}
By Taylor's expansion we have
	\begin{align}\label{err03}
		\|(R_3)^{\frac{1}{2}}\| \leq \int_{0}^{\tau} \|\p_t^2v(\cdot,s)\| ds, \quad \|(R_3)^{m-\frac{1}{2}}\| \leq  \tau \int_{t_{m-1}}^{t_m} \|\p_t^3 v(\cdot,s)\| ds, \quad m\geq 2.
	\end{align}
Based on the above estimates, we invoke Theorem \ref{thm4} to obtain
		\begin{align}
			& \tau \sum_{m=1}^{m_0} \|(R_2)^{m-\frac{1}{2}}\|   \leq Q\tau^2 \left[ \tau \sum_{m=1}^{m_0}t_m^{\alpha-1} + \tau^{\alpha} + 1 \right]  \leq Q \tau^2 \int_{0}^{t_{m_0}}  s^{\alpha-1} ds \leq Q \tau^2,  \label{mm07} \\
			& \tau \sum_{m=1}^{m_0} \|(R_3)^{m-\frac{1}{2}} \| \leq  \tau\int_{0}^{\tau} s^{\alpha} ds + \tau^2 \int_{\tau}^{t_{m_0}} s^{\alpha-1} ds \leq Q \tau^{2}, \label{mm08}
		\end{align}
which completes the proof.
	\end{proof}

	\begin{remark}
	It is worth mentioning that if we apply the preceding discretizations for the original model \eqref{pm1}, then only $O(\tau^{1+\alpha})$ accuracy could be obtained. The reason is that when we analyze the truncation errors in (\ref{err03}), the regularity of the solutions to \eqref{pm1}, that is,
$
			\|\p_t^2u(\cdot,t)\|_{\dot H^2} \leq Qt^{\alpha-1}$ and $   \|\p_t^3u(\cdot,t)\|\leq Qt^{\alpha-2}$ \cite{Mc},
	leads to
			\begin{align*}
			\tau \sum_{m=1}^{M} \|(R_3)^{m-\frac{1}{2}} \| \leq  Q\tau\int_{0}^{\tau} s^{\alpha-1} ds + Q\tau^2 \int_{\tau}^{t_M} s^{\alpha-2} ds \leq Q \tau^{\alpha+1}.
		\end{align*}
	This demonstrates the advantages of the MSD in improving the numerical accuracy.
	\end{remark}

\subsubsection{Fully-discrete scheme}
For spatial discretization, we integrate \eqref{mm02} multiplied by $\hat \chi \in H_0^1$ on $\Omega$ to get the weak formulation for $1\leq m \leq M$
	\begin{align}
		&\left(  \delta_t v^m, \hat{\chi} \right) + \tau^{\alpha} \sum_{p=0}^{m} \omega_{p}^{(\alpha)}  \big( \nabla v^{m-p-\frac{1}{2}} , \nabla\hat{\chi} \big)  =  \big( \frac{1}{2}[F(x,t_m)+F(x,t_{m-1})], \hat{\chi} \big)\nonumber\\
		&\qquad - \big((R_2)^{m-\frac{1}{2}}+(R_3)^{m-\frac{1}{2}}, \hat{\chi} \big). \label{FEM:e1}
	\end{align}
We then omit the truncation errors to establish the fully-discrete finite element scheme: find $V^m_h \in S_h$ with $V^0_h : =0$ such that for $\hat{\chi} \in S_h$ and $1\leq m \leq M$
	\begin{align}
		\left(  \delta_t V_h^m, \hat{\chi} \right) + \tau^{\alpha} \sum_{p=0}^{m} \omega_{p}^{(\alpha)}  \big( \nabla V_h^{m-p-\frac{1}{2}} , \nabla\hat{\chi} \big)  =  \big( \frac{1}{2}[F(x,t_m)+F(x,t_{m-1})], \hat{\chi} \big).   \label{lkx01}
	\end{align}
	Based on $w=u-u_0$ and \eqref{pm31}, the numerical solution for \eqref{pm1} can be recovered as
	\begin{align}\label{lmm1}
			U_h^m=V_h^m + I^1_t( f+\Delta u_0\beta_{\alpha+1})(x,t_m)+u_0.
	\end{align}

To perform error estimate for the fully-discrete scheme, we define the Ritz projection $I_h: H^1_0\rightarrow S_h$ by  $
     (\nabla (I_h v - v), \nabla \hat{\chi} ) = 0 $ for $\hat{\chi} \in S_h $ such that $\|v-I_hv\|\leq Qh^2\|v\|_{H^2}$.
	
	\begin{theorem}
	Under the assumptions of Theorem \ref{thm4}, we have
		\begin{equation*}
			\begin{array}{l}
				\|u^m -U_h^m\| \leq    Q (\tau^2+h^2),   \quad 1\leq m \leq M.
			\end{array}
		\end{equation*}
	\end{theorem}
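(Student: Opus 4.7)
The plan is to reduce the fully-discrete estimate to the semi-discrete estimate of Theorem \ref{thmqq} via the standard Ritz-projection splitting. First, combining \eqref{lmm1} with \eqref{pm31} shows that the known singular correction $I^1_t(f+\Delta u_0\beta_{\alpha+1})(x,t_m)+u_0$ is the same for $u^m$ and $U_h^m$, so $u^m-U_h^m=v^m-V_h^m$ and it suffices to bound the latter. Let $I_h$ denote the Ritz projection, which satisfies $(\nabla(v-I_hv),\nabla\hat\chi)=0$ for every $\hat\chi\in S_h$ together with $\|v-I_hv\|\le Qh^2\|v\|_{H^2}$. Split $v^m-V_h^m=\eta^m+\theta^m$ with $\eta^m:=v^m-I_hv^m$ and $\theta^m:=I_hv^m-V_h^m\in S_h$. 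The bound $\|\eta^m\|\le Qh^2$ is immediate from the first item of Theorem \ref{thm4}: integrating $\|\p_sv(\cdot,s)\|_{\dot H^2}\le Q$ in $s$ from $0$ to $t_m$ and using $v(\cdot,0)=0$ gives $\|v^m\|_{H^2}\le Q$.

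Next I would subtract \eqref{lkx01} from \eqref{FEM:e1} and use Galerkin orthogonality $(\nabla\eta^k,\nabla\hat\chi)=0$ to eliminate the $\eta$-contribution from the spatial bilinear form uniformly in the convolution index. This yields, for every $\hat\chi\in S_h$,
\begin{align*}
(\delta_t\theta^m,\hat\chi)+\tau^\alpha\sum_{p=0}^m\omega_p^{(\alpha)}\bigl(\nabla\theta^{m-p-\frac12},\nabla\hat\chi\bigr)=\bigl((R_2)^{m-\frac12}+(R_3)^{m-\frac12}+\delta_t\eta^m,\hat\chi\bigr),
\end{align*}
with $\theta^0=0$. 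Testing with $2\tau\theta^{m-\frac12}$, summing from $m=1$ to an arbitrary $m_0$, invoking the nonnegativity of the convolution quadratic form from \cite[Lemma 5]{Qiu} exactly as in the proof of Theorem \ref{thmqq}, and selecting $m^\ast\le m_0$ achieving $\|\theta^{m^\ast}\|=\max_{1\le m\le m_0}\|\theta^m\|$, I would obtain
\begin{align*}
\|\theta^{m_0}\|\le 2\tau\sum_{m=1}^{m_0}\bigl(\|(R_2)^{m-\frac12}\|+\|(R_3)^{m-\frac12}\|+\|\delta_t\eta^m\|\bigr).
\end{align*}

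The first two truncation sums are each $O(\tau^2)$ by the bounds \eqref{mm07}--\eqref{mm08} already established in the proof of Theorem \ref{thmqq}. For the projection-in-time term, note that $\delta_t\eta^m=(\mathrm{Id}-I_h)\delta_tv^m$, so $\|\delta_t\eta^m\|\le Qh^2\|\delta_tv^m\|_{H^2}\le Qh^2\sup_{s\in[t_{m-1},t_m]}\|\p_sv(\cdot,s)\|_{H^2}$, which is uniformly bounded by Theorem \ref{thm4}; hence $\tau\sum_{m=1}^{m_0}\|\delta_t\eta^m\|\le Qh^2T$. Combining these with $\|\eta^{m_0}\|\le Qh^2$ via the triangle inequality gives the desired $O(\tau^2+h^2)$ bound. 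The only delicate point is arranging the splitting so that the Ritz projection error stays outside the convolution sum on the spatial side; Galerkin orthogonality resolves this uniformly in the summation index $p$, and every remaining step parallels the semi-discrete argument, so no new obstacle arises beyond bookkeeping.
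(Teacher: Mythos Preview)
Your proposal is correct and follows essentially the same route as the paper's proof: the same Ritz-projection splitting $v^m-V_h^m=(v^m-I_hv^m)+(I_hv^m-V_h^m)$, the same use of Galerkin orthogonality to remove the projection error from the convolution sum, the same energy/positivity argument from the semi-discrete proof, and the same $O(h^2)$ bound on $\tau\sum_m\|\delta_t\eta^m\|$ via $\|\p_tv\|_{H^2}\le Q$. Apart from a harmless sign in your displayed error equation (the right-hand side should carry $-(R_2)^{m-\frac12}-(R_3)^{m-\frac12}-\delta_t\eta^m$, which is irrelevant once norms are taken), the argument matches the paper's proof.
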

\begin{proof}
As  $u^m -U_h^m=v^m -V_h^m$, it suffices to estimate $v^m -V_h^m$. Let $v^m- V_h^m = \xi^m - \eta^m$ with $\xi^m = I_h v^m - V_h^m \in S_h$ and $\eta^m = I_h v^m - v^m$.
	     We subtract \eqref{lkx01} from \eqref{FEM:e1} and set $\hat \chi = \xi^n$ to get
	\begin{align*}
		\left(  \delta_t \xi^m, \xi^n \right) + \tau^{\alpha} \sum_{p=0}^{m} \omega_{p}^{(\alpha)}  \left( \nabla \xi^{m-p-\frac{1}{2}} , \nabla \xi^n \right)  =  \left( \delta_t \eta^m - (R_2)^{m-\frac{1}{2}}-(R_3)^{m-\frac{1}{2}}, \xi^n \right).
	\end{align*}
We first follow \cite{Tho} to obtain $\|\p_t^{\ell} \eta(t) \| \leq Q h^2 \|\p_t^{\ell} v\|_{H^2}$ with $\ell=0,1$. Then the proof could be performed in analogous as that of Theorem \ref{thmqq}, that is, we use \eqref{mm07}, \eqref{mm08},  and
	\begin{align*}
		\tau \sum_{m=1}^{m_0} \|\delta_t \eta^m\| \leq Q \tau\sum_{m=1}^{m_0} \frac{1}{\tau} \int_{t_{m-1}}^{t_m} \|\p_t \eta(t) \|dt  \leq
		Q \int_{0}^{t_{m_0}} h^2 \|\p_t v\|_{H^2}dt \leq Qh^2
	\end{align*}
to get  $\|\xi^m\|\leq Q(\tau^2+h^2)$, which, together with $\|\eta^m\|\leq Qh^2 \|v\|_{H^2}\leq Qh^2$, completes the proof.
\end{proof}

	\subsubsection{Numerical experiment}
 Let $T=1$, $\Omega=(0,1)$, $u_0=\sin(\pi x)$ and $f=t^\alpha \sin(\pi x)$, and we use the uniform partition in space with $h=1/J$ for some $1<J\in\mathbb N$. We measure errors as (\ref{Rate}) and and convergence orders by the two mesh principle as (\ref{zz2}). We compare the trapezoidal convolution quadrature method (TCQM) for the original model \eqref{pm1} (see e.g. \cite{Cuesta,Qiu,Xu}) and the MSD-based TCQM, i.e., the scheme \eqref{lkx01}--\eqref{lmm1} in
 Table \ref{tab:example2} under $J=32$, which indicates that the MSD improves the numerical accuracy from $O(\tau^{1+\alpha})$ to $O(\tau^2)$ due to the separation of singularity, which again demonstrates its advantages.
	
	\begin{table}
		\centering \footnotesize
		\caption{Numerical methods for integrodifferential equation.}
		\renewcommand{\arraystretch}{1.1} 
		\begin{tabular}{ccccccccc}
			\hline
			&	\multicolumn{3}{c}{TCQM for original model}&&\multicolumn{3}{c}{Scheme \eqref{lkx01}--\eqref{lmm1}}\\
			\cline{2-4}	\cline{6-8}
			&	$ M $ & ${\rm Error}_M(\tau, h)$   & ${\rm Rate}_{\tau}$ 	&&	$ M $ & ${\rm Error}_M(\tau, h)$   & ${\rm Rate}_{\tau}$  \\
			\hline	
			&	128    & $ 2.40 \times 10^{-3} $  & *     &&	128    & $4.07 \times 10^{-5} $  &* \\
			&	256    & $ 1.01 \times 10^{-3} $ & 1.25      &&	256    & $1.05 \times 10^{-5} $ & 1.96\\
			$\alpha=0.25$			&	512    & $ 4.23 \times 10^{-4} $ & 1.25      &&	512    & $2.67 \times 10^{-6} $ & 1.97\\
			&	1024    & $ 1.78 \times 10^{-4} $ & 1.25      &&	1024    & $6.76 \times 10^{-7} $ & 1.98 \\
			&	2048     & $ 7.48 \times 10^{-5} $ &  1.25  	    &&	2048     & $1.71 \times 10^{-7} $ &  1.99  \\
			&Theory&&1.25&&&&2.00\\
			\hline
			&	128     & $ 1.47 \times 10^{-4} $ & *      &&	128    & $7.60 \times 10^{-5} $ & *\\
			&	256    & $ 4.38 \times 10^{-5} $ & 1.74      &&	256    & $ 1.90 \times 10^{-5} $ & 2.00\\
			$\alpha=0.75$		&	512    & $ 1.31 \times 10^{-5} $ & 1.74      &&	512    & $ 4.75 \times 10^{-6} $ & 2.00\\
			&	1024    & $ 3.95 \times 10^{-6} $ & 1.73      &&	1024    & $1.19 \times 10^{-6} $ & 2.00\\
			&	2048     & $ 1.19 \times 10^{-6} $ &  1.73  	    &&	2048     & $2.97 \times 10^{-7} $ &  2.00  \\
			&Theory&&1.75&&&&2.00\\
			\hline
		\end{tabular}
		\label{tab:example2}%
	\end{table}

	\subsection{Diffusion-wave equation}
	We consider the following diffusion wave equation with $1<\gamma<2$, which models the propagation of mechanical diffusive waves in viscoelastic media which exhibit a power-law creep \cite{Lin,Mai1,Mai2,ZhangZ}
\begin{gather}
	\hspace{-0.2in}\left( \beta_{2-\gamma} * \partial_t^2 u \right)(x,t) - \Delta u(x,t) = f(x,t), \quad (x,t) \in \Omega \times (0,T], \label{q1} \\
	u(x,t) = 0,~ (x,t) \in \partial\Omega \times [0,T];~
	u(x,0) = u_0(x), ~ \partial_t u(x,0) = \bar{u}_0(x), ~ x \in \Omega. \label{q3}
\end{gather}
For \eqref{q1}--\eqref{q3}, we apply the variable substitution $w=u-u_0$ to get
\begin{gather*}
	\left(\beta_{2-\gamma} * \p_t^{2} w \right) (x,t)  - \Delta w(x,t) = f(x,t) + \Delta u_0(x),~~(x,t)\in \Omega\times (0,T], \\
	\hspace{-0.3in}w(x,t)=0, ~  (x,t)\in \p\Omega\times [0,T];~
	w(x,0)=0,~  \p_t w(x,0)= \bar{u}_0(x),  ~ x\in\Omega.
\end{gather*}
We then calculate its convolution with $\beta_{\gamma-1}$ to obtain
\begin{gather*}
	\p_t w (x,t)  - I_t^{\gamma-1}\Delta w(x,t) =  I_t^{\gamma-1}f(x,t) + \beta_{\gamma}(t) \Delta u_0(x)  + \bar{u}_0(x),~~(x,t)\in \Omega\times (0,T],  \\
	w(x,t)=0,~  (x,t)\in \p\Omega\times [0,T];~
	w(x,0)=0, ~ \p_t w(x,0)= \bar{u}_0(x),  ~ x\in\Omega,
\end{gather*}
which is an integrodifferential equation considered in Section \ref{sec42}. Thus, the MSD could facilitate numerical computation by improving the solution regularity. For instance, if
we separate two singular terms from $w$, i.e. we take
$$
			w = v + \sum_{i=0}^{1}(\Delta I_t^{\gamma})^iI_t^1 \big( I_t^{\gamma-1}f + \beta_{\gamma} \Delta u_0  + \bar{u}_0 \big),
		$$ 
		a similar proof as that of Theorem \ref{thm4} could give the boundedness of $\p_t^2v$ and $\p_t^3v$ under suitable assumptions on the data, which is consistent with the regularity assumptions in the classical work \cite{Sun} on the diffusion-wave equation and thus supports the $O(\tau^{2-\alpha})$ accuracy of the scheme that is proposed in \cite{Sun} and widely adopted in the literature.

\section{Concluding remarks}\label{Sec6}

This work proposes an MSD method for nonlocal-in-time problems, including the fractional relaxation equation,Volterra integral equation, subdiffusion, integrodifferential equation and diffusion-wave equation, which separates singularities from the solutions by simple variable substitutions. Several concrete examples are given to demonstrate the advantages of MSD in numerical computation.

Apart from the considered examples, the MSD could also be applied for more general nonlocal-in-time problems or other numerical methods. For instance, one could consider models with variable coefficients (e.g. the variable diffusivity tensor in subdiffusion) or parameters (e.g. the variable exponent in fractional operators \cite{Zheng}). From the aspect of numerical method, one could improve the numerical accuracy of the L2-$1_\sigma$ scheme \cite{Liao1} on the uniform mesh for subdiffusion, or relax the singularity of the mesh for the averaged PI rule \cite{Mus} or eliminate the logarithmic factor in error estimate in \cite{Mus} for integrodifferential equation. We will investigate these interesting topics in the near future.

\section*{Acknowledgments}
This work was partially supported by the Natural Science Foundation of Shandong Province (No. ZR2025QB01),  the National Natural Science Foundation of China (No. 12301555), the National Key R\&D Program of China (No. 2023YFA1008903),  the Taishan Scholars Program of Shandong Province (No. tsqn202306083), the Postdoctoral Fellowship Program of CPSF (No. GZC20240938).


\begin{thebibliography}{10}


		\bibitem{AdaFou} R. Adams and J. Fournier, {\it Sobolev Spaces}, Elsevier, San Diego, 2003.
		

		
		
		

		
		
	\bibitem{Bar} E. Barkai, Aging in subdiffusion generated by a deterministic dynamical system, {\it Phys. Rev. Lett.}, 90 (2023), 104101.

\bibitem{Bru} H. Brunner, {\it Volterra integral equations}, Cambridge Monographs on Applied and Computational Mathematics, vol. 30, Cambridge University Press, Cambridge, 2017.
		
		\bibitem{CheChe}  Z. Chen, Y. Chen, Y. Huang, Piecewise spectral collocation method for second order Volterra integro-differential equations with nonvanishing delay, {\it Adv. Appl. Math. Mech.}, 14 (2022), 1333--1356.
		
			\bibitem{Chen} Y. Chen, Z. Chen, Y. Huang,  An  $hp$-version of the discontinuous Galerkin method for fractional integro-differential equations with weakly singular kernels, {\it BIT}, 64 (2024), 27.
			
		\bibitem{Chung} E. Chung, Y. Efendiev, W. Leung, and M. Vasilyeva, Nonlocal multicontinua with representative volume elements. Bridging separable and non-separable scales, {\it Comput. Methods Appl. Mech. Engrg.}, 377 (2021), 113687.
		
		
		\bibitem{Cuesta} E. Cuesta, C. Palencia, A fractional trapezoidal rule for integro-differential equations of fractional order in Banach spaces, {\it Appl. Numer. Math.}, 45 (2003), 139--159.
		
		\bibitem{Deng} W. Deng, Finite element method for the space and time fractional Fokker-Planck equation, {\it SIAM J. Numer. Anal.}, 47 (2009), 204--226.
		
		
		\bibitem{Farrell} P. Farrell, A. Hegarty, J. M. Miller, E. O'Riordan, G. I. Shishkin, {\it Robust computational techniques for boundary layers}, Chapman and hall/CRC, 2000.

		
		
		
		
		
		
		
		\bibitem{Hon} 
J. Hong, B. Jin, Z. Wu,
Direct algorithms for reconstructing small conductivity inclusions in subdiffusion, {\it Multiscale Model. \& Simul.}, 23 (2025), 1358--1388.
		
		\bibitem{Jinbook} B. Jin, {\it Fractional differential equations–an approach via fractional derivatives}, Appl. Math. Sci., 206, Springer, Cham, 2021.
		
		

       \bibitem{Kor} N. Korabel, E. Barkai,  Paradoxes of subdiffusive infiltration in disordered systems, {\it Phys. Rev. Lett.}, 104 (2010), 170603.
		
		
		
		

		
		
		
		
		

       \bibitem{Lia} H. Liang, M. Stynes, A general collocation analysis for weakly singular Volterra integral equations with variable exponent, {\it IMA J. Numer. Anal.}, 44 (2024), 2725--2751.		
		
		\bibitem{Liao} H. Liao, D. Li, J. Zhang, Sharp error estimate of the nonuniform L1 formula for linear reaction-subdiffusion equations,	{\it SIAM J. Numer. Anal.}, 56 (2018), 1112--1133.
		
		\bibitem{Liao1} H. Liao, W. McLean, J. Zhang, A discrete Gr\"{o}nwall inequality with applications to numerical schemes for subdiffusion problems, {\it SIAM J. Numer. Anal.}, 57 (2019), 218--237.
		
		\bibitem{Lin} B. Lu, Z. Hao, C. Moya, G. Lin, FPINN-deeponet: A physics-informed operator learning framework for multi-term time-fractional mixed diffusion-wave equations, {\it J. Comput. Phys.}, 538 (2025) 114184.
		
		
		
		
		
		

        \bibitem{Mai1} F. Mainardi, Fractional diffusive waves in viscoelastic solids, {\it Nonlinear waves in solids}, 137 (1995), 93--97.

        \bibitem{Mai2} F. Mainardi, The fundamental solution for the fractional diffusion-wave equation, {\it Appl. Math. Lett.}, 9 (1996), 23--28.

		
		
			
		
		\bibitem{Mc} W. McLean, V.  Thom\'{e}e, Numerical solution of an evolution equation with a positive-type memory term, {\it J. Austral. Math. Soc. Ser. B}, 35 (1993), 23--70.
		
		
		
		
		

       \bibitem{Mus} K. Mustapha, H. Mustapha,  A second-order accurate numerical method for a semilinear integrodifferential equation with a weakly singular kernel, {\it IMA J. Numer. Anal.}, 30 (2010), 555--578.

		
		
		
			
		
		

      \bibitem{Pruss} J. Pr\"{u}ss, {\it Evolutionary Integral Equations and Applications}, Monographs in Mathematics, vol. 87, 1993.
		
	 \bibitem{Qiu}  W. Qiu, G. Fairweather, X. Yang, H. Zhang, ADI finite element Galerkin methods for two-dimensional tempered fractional integro-differential equations, {\it Calcolo}, 60 (2023), 41.
		
     \bibitem{Renardy1}  M. Renardy, W. Hrusa, J. A Nohel, {\it Mathematical problems in viscoelasticity}, Longman Scientific \& Technical, 1987.
		
		
		
		\bibitem{Stynes} M. Stynes, E. O'Riordan, J. Gracia, Error analysis of a finite difference method on graded mesh for a time-fractional diffusion equation, {\em SIAM J. Numer. Anal.}, 55 (2017), 1057--1079.
		
		
		
		\bibitem{Sun} Z. Sun, X. Wu, A fully discrete difference scheme for a diffusion-wave system, {\em  Appl. Numer. Math.}, 56 (2006), 193--209.
		
		\bibitem{Tho} V. Thom\'{e}e, {\em Galerkin Finite Element Methods for Parabolic Problems}, Lecture Notes in Mathematics 1054, Springer-Verlag, New York, 1984.
		
		
		
		
		
		
		
		
		
		
		
		\bibitem{Tur} I. Turner and P. Perr\'e, The use of implicit evolution equations with generalized Mittag-Leffler kernels to elucidate a connection between distributed microstructure models in porous media and fractional calculus. {\it Multiscale Model. Simul.}, 23 (2025), 1089--1144.


		\bibitem{Xu} D. Xu, Uniform $\ell^1$ convergence in the Crank-Nicolson method of a linear integro-differential equation for viscoelastic rods and plates, {\it Math. Comp.}, 83 (2014), 735--769.


        \bibitem{Yi} L. Yi, B. Guo, An $h$-$p$ version of the continuous Petrov--Galerkin finite element method for Volterra integro-differential equations with smooth and nonsmooth kernels, {\it SIAM J. Numer. Anal.}, 53 (2015), 2677--2704.


		\bibitem{ZhangZ} Z. Zhang, Z. Zhou, Backward diffusion-wave problem: stability, regularization, and approximation, {\em
	    SIAM J. Sci. Comput.}, 44 (2022), A3183--A3216.
		
		\bibitem{Zheng} X. Zheng, Two methods addressing variable-exponent fractional initial and boundary value problems and Abel integral equation, {\em CSIAM Trans. Appl. Math.}, (2025), doi: 10.4208/csiamam.SO-2024-0052.
		
        \bibitem{ZheWan} X. Zheng, H. Wang, An optimal-order numerical approximation to variable-order space-fractional diffusion equations on uniform or graded meshes, {\it SIAM J. Numer. Anal.}, 58 (2020), 330--352.		
        
         \bibitem{Zhu} W. Fan, X. Hu, S. Zhu, Numerical reconstruction of a discontinuous diffusive coefficient in variable-order time-fractional subdiffusion, {\it J. Sci. Comput.}, 96 (2023), 13.
		
		
		
		
		
		
		
		
		
	\end{thebibliography}
\end{document}